\newtheorem {theorem}    {Theorem}[section]
\newtheorem {lemma}      [theorem]    {Lemma}
\newtheorem {corollary}  [theorem]    {Corollary}
\newtheorem {proposition}[theorem]    {Proposition}
\newtheorem {property} [theorem] {Property}
\theoremstyle{definition}
\newtheorem{definition}[theorem]{Definition}
\newtheorem{ex}[theorem]{Example}
\def\la{\langle}
\def\ra{\rangle}
\def\Z{\mathbb{Z}}
\def\R{\mathbb{R}}
\def\C{\mathbb{C}}
\def\a{\alpha}
\numberwithin{equation}{section}
\begin{document}

\title[On the convergence of  Kac--Moody Eisenstein series]{On the convergence  of  Kac--Moody Eisenstein series}

\date{\today}

\author[L. Carbone]{Lisa Carbone$^{\dagger}$}
\address{Department of Mathematics, Rutgers University, Piscataway, NJ 08854-8019, U.S.A.}
\email{carbonel@math.rutgers.edu}
\thanks{$^{\dagger}$This work was supported in part by NSF grant DMS--1101282.}

\author[H. Garland]{Howard Garland}
\address{Department of
Mathematics, Yale University, New Haven, CT 06511, U.S.A.}
\email{garland-howard@yale.edu}

\author[K.-H. Lee]{Kyu-Hwan Lee$^{\star}$}
\thanks{$^{\star}$This work was partially supported by a grant from the Simons Foundation (\#318706).}
\address{Department of
Mathematics, University of Connecticut, Storrs, CT 06269, U.S.A.}
\email{khlee@math.uconn.edu}

\author[D. Liu]{Dongwen Liu$^{\diamond}$}
\thanks{$^{\diamond}$This work was partially supported by  the Fundamental Research Funds for the Central Universities 2016QNA2002.}
\address{School of Mathematical Sciences, Zhejiang University, Hangzhou 310058, P.R. China}\email{maliu@zju.edu.cn}

\author[S. D. Miller]{Stephen D. Miller$^\square$}
\thanks{$^\square$This work was supported by National Science Foundation grant DMS-1801417.}
\address{Department of Mathematics, Rutgers University, Piscataway, NJ 08854-8019, U.S.A.}
\email{miller@math.rutgers.edu}

\subjclass[2010]{Primary 20G44; Secondary 11F70}

\begin{abstract}  Let $G$ be a representation-theoretic Kac--Moody  group associated to a nonsingular symmetrizable generalized Cartan matrix. We first consider Kac-Moody analogs of Borel Eisenstein series (induced from quasicharacters  on the Borel), and prove they converge almost everywhere inside the Tits cone for arbitrary spectral parameters in the Godement range.  We then use this result to show the full absolute convergence everywhere inside the Tits cone (again for spectral parameters in the Godement range) for a  class of Kac--Moody groups satisfying a certain combinatorial property, in particular for rank-2 hyperbolic groups.
\end{abstract}

\maketitle

\section{Introduction}\label{sec:intro}

The fundamental theory of Eisenstein series on reductive groups   has played pivotal roles in the formulation of   Langlands' functoriality conjecture \cite{La1, LA1}, and in the study of $L$--functions by means of the Langlands--Shahidi method (e.g., \cite{KimSh, Kim}).

The extension of the theory of Eisenstein series to Kac--Moody groups is of great interest, due to its conjectural roles in some of the central problems in number theory, such as establishing important analytic properties of $L$-functions \cite{BFH,Sh}.   Recently,
Eisenstein series on exceptional Lie groups have been shown to occur explicitly as coefficients of correction terms in certain maximally
supersymmetric string theories   \cite{GRV,GMRV,GMV}. Conjectural Eisenstein series on Kac--Moody groups appear in recent developments  in string theory \cite{FK,FKP}.  All of these potential applications require a proof of convergence (and in some cases, analytic continuation) of Kac-Moody Eisenstein series.

In his papers \cite{G04, G06, GMS1, GMS2, GMS3, GMS4}, Garland extended the classical theory of Eisenstein series to arithmetic  quotients  $G_\Z\backslash G_\R/ K$ of affine Kac--Moody groups $G$.
 In particular, he established   absolute convergence for spectral parameters in a Godement range, and then proved a meromorphic continuation beyond it.  Absolute convergence  has been generalized to affine Kac--Moody groups over number fields by Liu \cite{Li}.  Garland, Miller, and Patnaik \cite{GMP}  showed that affine Eisenstein series  induced from cusp forms over $\mathbb Q$ are entire functions of the spectral parameter. It should be mentioned that earlier results  were established in the function field setting by \cite{BK}; related results also appear in \cite{Ka,LL,P}.

Beyond the affine case,  Carbone, Lee, and Liu \cite{CLL} studied Eisenstein series on the rank $2$ hyperbolic Kac--Moody groups with symmetric generalized Cartan matrices, and established almost-everywhere convergence of the series. However, they could not obtain everywhere convergence as the method in the affine case does not generalize directly to the hyperbolic case.

\noindent{\bf{Borel Eisenstein series}}

In this paper we consider the problem of establishing the absolute convergence of Eisenstein series on arbitrary Kac-Moody groups, for spectral parameters $\lambda$ in the traditionally-studied Godement range (i.e., $\operatorname{Re}(\lambda-\rho)$ is strictly dominant).  We indeed establish their almost-everywhere convergence in this generality, and additionally show the absolute convergence for a wide class of groups.

More precisely, let $G$ be a representation-theoretic  Kac--Moody group, and let $\mathfrak g$ be the corresponding real Kac--Moody algebra with a fixed Cartan subalgebra $\mathfrak h$.  We assume that $\frak g$ is infinite-dimensional and non-affine, since the finite-dimensional and affine cases have been well-studied.  Let $r=\dim(\mathfrak h)$ denote the rank of $G$, $I$ the index set $\{1,\ldots,r\}$, and $\Phi_+$ (resp., $\Phi_{-}$) the positive (resp., negative) roots of $\frak g_\C$.  Then $G_{\mathbb R}$ has
the Iwasawa decomposition $G_{\mathbb R}=UA^+K$, where $U$ is a maximal pro-unipotent subgroup, $A^+$ is the connected component of a maximal torus, and $K$ is a subgroup of $G$ playing the role  of the maximal compact subgroup from the finite-dimensional theory.
(See Section~\ref{Zform} for more details.)

 We formally define the Borel Kac--Moody Eisenstein series $E_\lambda(g)$ for  $g \in G_{\mathbb R}$  and $\lambda \in\mathfrak{h}^*_\mathbb{C}$  by
\begin{equation}\label{formaldefofEis}
E_\lambda (g) \ \  = \ \ \sum_{\gamma \in (\Gamma \cap B) \backslash \Gamma} a(\gamma g)^{\lambda+\rho}\,,
\end{equation}
where $a(g)$ is the $A^+$--component of the Iwasawa decomposition of $g$, $\rho$ is the Weyl vector, $\Gamma=G_\Z$ is the arithmetic subgroup defined at the end of Section~\ref{Zform}, and $B\supset NA^+$ is a Borel subgroup.  The goal of this paper is to study the convergence and analyticity of this formal sum.
Our main result can be stated as follows:

\begin{theorem} \label{thm-intro}
Assume  that $\lambda\in\mathfrak{h}^*_\mathbb{C}$ satisfies $\operatorname{Re}(\langle\lambda, \alpha_i^\vee\rangle)>1$ for each simple coroot $\alpha_i^\vee$, $i \in I$, and that Property~\ref{conj} holds. Then the Kac--Moody Eisenstein series $E_\lambda(g)$ converges absolutely for $g\in \Gamma UA_\mathfrak{C}K$, where $A_\mathfrak{C}\subset A^+$ is the image of the Tits cone $\mathfrak C$ under the exponential map $\exp: \mathfrak{h} \to A^+$.
\end{theorem}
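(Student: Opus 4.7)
The plan is to leverage the almost-everywhere convergence result established earlier in the paper, and to use Property~\ref{conj} to eliminate the exceptional null set. As a first reduction, the formal expression in \eqref{formaldefofEis} is left-$\Gamma$-invariant and right-$K$-invariant through the Iwasawa $a$-component; this restricts the problem to $g=ua$ with $u\in U$ and $a\in A_\mathfrak{C}$. Since absolute convergence depends only on $\operatorname{Re}(\lambda)$, I may further assume $\lambda$ is real and in the Godement range, so every term $a(\gamma g)^{\lambda+\rho}$ is nonnegative, and the task reduces to bounding $\sum_\gamma a(\gamma g)^{\lambda+\rho}$.

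The next step turns the a.e.\ convergence into an integral bound. Fix $g=ua$ and a compact neighborhood $N$ of $g$ inside $UA_\mathfrak{C}K$; by the a.e.\ convergence together with a standard measure-theoretic argument (e.g.\ extracting a sub-level set of $E_\lambda$), one obtains a subset $N'\subset N$ of positive measure on which $E_\lambda$ is uniformly bounded. Tonelli's theorem then gives
\begin{equation*}
\sum_{\gamma \in (\Gamma\cap B)\backslash \Gamma} \int_{N'} a(\gamma g')^{\lambda+\rho}\,dg' \ = \ \int_{N'} E_\lambda(g')\,dg' \ < \ \infty.
\end{equation*}
What remains is a pointwise-to-integral comparison: to establish an inequality $a(\gamma g)^{\lambda+\rho}\leq C\int_{N'} a(\gamma g')^{\lambda+\rho}\,dg'$ with a constant $C$ uniform in $\gamma$. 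This uniform comparison is the role I expect Property~\ref{conj} to play, encoding whatever combinatorial data (on the Weyl group, or on the relevant Bruhat-type decomposition of $\Gamma$) prevents the functions $g'\mapsto a(\gamma g')^{\lambda+\rho}$ from oscillating too wildly as $\gamma$ ranges over the infinitely many cosets.

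The main obstacle is, naturally, this uniform comparison. The dependence of $a(\gamma g)^{\lambda+\rho}$ on $g$ is governed by the Bruhat cell of $\gamma$, and as $\gamma$ varies there is no a priori equicontinuity to exploit; the essential difficulty of the theorem is isolating a property of $G$ that makes such uniform control available. Once Property~\ref{conj} supplies an estimate of the displayed type, summing over $\gamma$ and combining with the integral bound produces absolute convergence of $E_\lambda(g)$ at every $g\in \Gamma UA_\mathfrak{C}K$. Verifying Property~\ref{conj} for the class of groups cited in the abstract---particularly the rank-2 hyperbolic case---constitutes the other genuine piece of work behind the theorem.
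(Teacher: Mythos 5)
Your proposal has a genuine gap at its central step, and it is precisely the step you yourself flag as ``the main obstacle.'' The uniform comparison $a(\gamma g)^{\lambda+\rho}\leq C\int_{N'}a(\gamma g')^{\lambda+\rho}\,dg'$ with $C$ independent of $\gamma$ is not a technical detail to be filled in later --- it \emph{is} the theorem. In the finite-dimensional theory such a bound follows from compactness of $K$: one writes $a(\gamma g v)=a(\gamma g)\,a(k(\gamma g)v)$ and uses that $a(kv)^{\lambda+\rho}$ is bounded above and below as $k$ ranges over $K$ and $v$ over a compact neighborhood. In the Kac--Moody setting $K$ is not compact, this control fails, and that failure is exactly why everywhere convergence was open (cf.\ the discussion of \cite{CLL} in the introduction). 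Moreover, Property~\ref{conj} cannot ``supply an estimate of the displayed type'': it is a purely combinatorial statement about the root system (that each $w$ admits a factorization $vw_\beta$ with $\alpha-\beta$ never a root for $\alpha\in\Phi_v$) and carries no analytic content about oscillation of $g'\mapsto a(\gamma g')^{\lambda+\rho}$. Finally, even granting some comparison, restricting to a positive-measure subset $N'$ rather than a full neighborhood makes matters worse, since the terms $a(\gamma g')^{\lambda+\rho}$ concentrate on ever-smaller regions as $\gamma$ moves through deeper Bruhat cells and could be uniformly small on $N'$ while large at $g$.

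The paper's route is entirely different and does not pass through the almost-everywhere result at all. One groups the sum over $(\Gamma\cap B)\backslash\Gamma$ by Bruhat cells $BwB$, writes $w$ as an \emph{admissible} reduced word (Definition~\ref{def-adm}, which exists by Property~\ref{conj}), and uses the property in two concrete ways: first, to show the one-parameter subgroups for the roots in $\Phi_w$ commute (Lemma~\ref{lem-aqq}), so that $wu'\gamma_2$ factors as a product of rank-one pieces $w_{\beta_i}u_{\beta_i}(x_i+m_i)$; second, to verify via root strings that $\langle v_i^{-1}(\lambda+\rho)+\sum_{\alpha\in S_i}\alpha,\beta_{i+1}^\vee\rangle>1$ (Lemma~\ref{lem-aqa}), which lets one apply the $SL_2$ estimate of Lemma~\ref{4.1} one simple reflection at a time. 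This yields the explicit bound of Lemma~\ref{4.5} for the sum over $\Gamma\cap U_w$, the sum over the remaining double-coset representatives is identified with the $\zeta$-quotient part of the Gindikin--Karpelevich formula and bounded uniformly in $w$, and the resulting majorant $\sum_w(2M)^{\ell(w)}a^{w^{-1}\lambda+\rho}$ converges by Theorem~\ref{mainthm} (the strengthening of Looijenga's lemma accommodating the $M^{\ell(w)}$ factor). If you want to salvage your outline, you would need to replace the hoped-for pointwise-to-integral comparison with this cell-by-cell majorization; as written, the proposal restates the difficulty rather than resolving it.
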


\noindent
The condition on $\lambda$ is precisely the Godement range, and  appears in the classical theory \cite{LA1}. Property~\ref{conj} and hence
the conclusions of Theorem~\ref{thm-intro} hold for all $G$ of rank $2$ (Proposition \ref{prop-rank2}), as well when  the Cartan matrix is symmetric and has  sufficiently large entries (Proposition \ref{prop-generic}).
 However,
 Property~\ref{conj} is not true for all   Kac--Moody root systems -- it even fails in the  finite-dimensional   Example \ref{counterex}.

Though a Godement condition is very natural on the spectral parameter,
it is not clear what the full range of absolute convergence of $E_\lambda(g)$ is in the variable $g$.
  Theorem~\ref{thm-intro} adopts a Tits cone constraint, one which naturally occurs in the literature  when studying simpler sums, e.g., in the Lemma of  Looijenga  \cite{Lo} (see also \cite[\S10.6]{K})   quoted in  Lemma~\ref{lo}, which corresponds to $g$ in the set $UA_{\mathfrak C}K$.  The extension to the domain $\Gamma UA_\mathfrak{C}K$ -- which Proposition \ref{nonstable} shows is in fact larger --  comes from the $\Gamma$-invariance in (\ref{formaldefofEis}).

In order to prove Theorem~\ref{thm-intro}  we consider the constant term $E^\sharp_\lambda(g)$ of the series  $E_\lambda(g)$, which is computed by the Gindikin--Karpelevich formula, and establish its absolute convergence in Theorem~\ref{3.3}. This is achieved by  using Looijenga's Lemma and  the observation that $M^{\ell (w)} c(\lambda, w)$ is bounded  for any fixed  $M>0$, as $w$ varies over  the Weyl group $W$. Here the function $c(\lambda, w)$ is the product of ratios of the Riemann $\zeta$-function defined in (\ref{eee}).  Unlike Theorem~\ref{thm-intro}, which assumes Property~\ref{conj}, Theorem~\ref{3.3} and its almost-everywhere convergence Corollary~\ref{cor} hold for {\it all} symmetrizable $G$.

In Section \ref{Conv} we show that the Kac--Moody Eisenstein series $E_\lambda(g)$ is dominated by a sum over the Weyl group $W$, which is a close variant   of the constant term $E^\sharp_\lambda(g)$.  Having proved the convergence of $E^\sharp_\lambda(g)$ in Theorem \ref{3.3}, we use some consequences of Property~\ref{conj} to establish  Theorem \ref{thm-intro}.

\noindent {\bf Remark:} Though Theorem~\ref{thm-intro} is stated only for Borel Eisenstein series, its conclusions (with modifications just as in the classical finite-dimensional setting) hold for {\em cuspidally} induced Eisenstein series as well, at least for parabolics with finite-dimensional Levi components.  This is because cusp forms on finite-dimensional semisimple groups are bounded, hence bounded above by certain Borel Eisenstein series on the Levi component.  A parabolic Eisenstein series for $G$ induced from such an Eisenstein series is itself a Borel Eisenstein series for $G$, hence Theorem~\ref{thm-intro} implies a corresponding convergence statement for Eisenstein series induced from cusp forms.  See   \cite[Proposition 12.6]{Bo},
\cite[Proposition II.1.5]{MW}, and \cite[\S3]{G11}, where this argument is carried out in more detail in simpler settings.  For the same reason, the weaker statements Theorem~\ref{3.3} and Corollary~\ref{cor} also transfer to results for almost-everywhere convergence for cuspidally-induced Eisenstein series.

We expect Kac--Moody Eisenstein series to provide   interesting applications.  In analogy with $SL_2(\mathbb{Z})$, the group $E_{10}(\mathbb{Z})$ is conjectured to be the discrete invariance group for certain functions that arise in   11-dimensional supersymmetric string theory \cite{DKN,Ga}. Automorphic forms on $E_{10}$ and $E_{11}$  are conjectured  to encode  higher derivative corrections in string theory and M--theory \cite{DN,DHHKN,FGKP,W1}.

Eisenstein series  on Kac--Moody groups $G_\R$ are invariant under translations by $G_\Z$ and hence have Fourier expansions.  In \cite{CLL}, the authors defined and calculated the degenerate Fourier coefficients for Eisenstein series on rank $2$ hyperbolic Kac--Moody  groups over $\R$. Fleig \cite{Fl}  gave the Fourier integrals needed to obtain the constant term and  higher order Fourier
modes for Eisenstein series on $E_9$, $E_{10}$, and $E_{11}$, and showed how the  string scattering amplitude ``collapse mechanism'' of \cite{FK} extends to the higher order Fourier modes. He also gave explicit expressions for the constant terms and Fourier modes of some Kac--Moody Eisenstein series.
 However, these calculations  tacitly assume absolute convergence (and in some cases,  meromorphic continuation) of the Kac--Moody Eisenstein series, which has yet to be accomplished for $E_{10}$ and $E_{11}$; Theorem~\ref{thm-intro} does not  cover these cases because of its restrictive assumption of Property~\ref{conj}.

It is a pleasure to thank our colleagues Alexander Braverman, Daniel Bump, Solomon Friedberg, Michael B. Green, Henrik Gustafsson, Jeffrey Hoffstein, David Kazhdan, Henry Kim, Axel Kleinschmidt, Manish Patnaik, Daniel Persson, Peter Sarnak,  Freydoon Shahidi, and Pierre Vanhove for their helpful conversations.

\section{Kac--Moody groups} \label{Zform}

Let $I=\{ 1, 2, \dots , r \}$,
${\sf A}=(a_{ij})_{i,j \in I}$ be an $r \times r$  symmetrizable  generalized Cartan matrix, and $(\mathfrak h, \Delta, \Delta^\vee)$ be a realization of ${\sf A}$, where $\Delta=\{ \alpha_1, ... , \alpha_r \} \subset \mathfrak h^*$ and  $\Delta^\vee=\{ \alpha^\vee_1, ... , \alpha^\vee_r \} \subset \mathfrak h$ are the set of simple roots and set of simple coroots, respectively (see \cite[\S1]{K} for definitions).

Throughout this paper we shall make the simplifying assumption that ${\sf A}$ is nonsingular, which means that $\mathfrak{h}$ and $\mathfrak{h}^*$  are spanned by the simple roots $\alpha_i$ and simple coroots $\alpha_i^\vee$, respectively. In particular we intentionally exclude the affine case, which  has been studied extensively in Garland's works and in \cite{GMP} (and which has a somewhat different flavor anyhow).

Recall that  $\langle\alpha_j, \alpha^\vee_i\rangle = a_{ij}$ for $i,j \in I$, where $\langle \cdot,\cdot\rangle$ is the natural pairing between $\frak{h}^*$ and $\frak{h}$.
Denote the fundamental weights by $\varpi_i\in\mathfrak{h}^*$, $i\in I$, which form the   basis of $\frak h^*$ dual to the $\alpha_i^\vee$.  Their integral span is the weight lattice $P$.

Let $\mathfrak g_{\C}=\mathfrak g_{\C}({\sf A})$ be the Kac--Moody algebra associated to $(\mathfrak h, \Delta, \Delta^\vee)$.  We denote by $\Phi$ the set of roots of $\mathfrak g_{\C}$ and have $\Phi = \Phi_+ \sqcup \Phi_{-}$, where $\Phi_+$ (resp. $\Phi_{-}$) is the set of positive (resp. negative) roots corresponding to the choice of $\Delta$. 
 Let $w_i:=w_{\a_i}$ denote the simple Weyl reflection associated to the simple root $\alpha_i$; the $w_i$ for $i\in I$  generate the Weyl group $W$ of $\frak g_\C$.  A root $\alpha\in\Phi$ is called a {\it real root} if there exists $w\in W$ such that $w\alpha$ is a
simple root. A root $\alpha$ which is not real is called {\it
imaginary}.  For each real root $\alpha$ written as $w\alpha_i$ for some $w\in W$ and $i\in I$, its associated coroot  is well-defined by the formula $\alpha^\vee= w\alpha_i^\vee$ \cite[1.3.8]{Kumar}.

For $i\in I$ let $e_i$ and $f_i$ be the Chevalley generators of $\mathfrak{g}_\C$; we denote by $\frak g$ the real Lie subalgebra they generate, so that $\frak g_\C=\frak g\otimes_\R \C$. Let ${\mathcal U}_{\mathbb{C}}$ be the universal enveloping algebra of $\mathfrak{g}_\C$.
Let $\Lambda\subseteq \mathfrak{h}^*$ be the integral linear span of  the simple roots $\alpha_i$,   $i\in I$, and let $\Lambda^{\vee}\subseteq
\mathfrak{h}$ be the integral  span of the simple coroots
$\alpha^{\vee}_i$,   $i\in I$. Let  ${\mathcal U}_{{\mathbb{Z}}}\subseteq {\mathcal U}_{{\mathbb{C}}}$ be the ${\mathbb{Z}}$--subalgebra generated by
$$\dfrac{e_i^{m}}{m!},\ \dfrac{f_i^{m}}{m!},\ \text{and} \ \left (\begin{matrix}
h\\ m\end{matrix}\right ) = \frac{h(h-1)\cdots(h-m+1)}{m!}$$ for $i\in I$,
$h\in\Lambda^{\vee}$, and
$m\geq 0$ (see \cite[\S4]{CG}).

We will now review the representation theoretic Kac--Moody groups  $G_{F}$ associated to $\frak g$ and a field $F$.  The paper \cite{CG} provides an explicit construction for arbitrary fields $F$, and we shall presently  review the construction for the fields $\mathbb{Q}$, $\R$, and $\mathbb{Q}_p$.  Keeping this in mind, we assume $F\supset \mathbb{Q}$.

Let $(\pi,V)$ denote the unique irreducible highest weight module for $\mathfrak{g}$ corresponding to a choice of some dominant integral weight, and
let  $v\in V$ be a nonzero highest weight vector.   We set
$$V_{\mathbb{Z}}\  =\ \mathcal{U}_{\mathbb{Z}}\cdot v\,.$$
Then $V_{\mathbb{Z}}$  is a ${\mathcal U}_\mathbb{Z}$--module contained in $V_{F}=F\otimes_{\mathbb{Z}}V_{\mathbb{Z}}$.  Since $V$ is integrable, $e_{i}$ and $f_{i}$ are locally nilpotent on $V$ and $V_\Z$. It follows that they are locally nilpotent on $V_{F}$ and hence elements of $\textrm{End}(V_{F})$.
Thus for  $s,t\in F$
 and $i\in I$,  their exponentials
$$ \ \ \ \ \ \ \ u_{\alpha_i}(s)\ =\ \exp(\pi(se_i))$$
$$\text{and}~~u_{-\alpha_i}(t)\ =\ \exp(\pi(tf_i))$$
  are actually locally finite sums (meaning their action on any fixed vector is given by a finite sum), and thus define elements of  $\textrm{Aut}(V_{F})$.

For $t \in F^{\times}$ and $i\in I$  we set \[ w_{\alpha_i}(t) \  = \ u_{\alpha_i}(t)  u_{-\alpha_i}(-t^{-1}) u_{\alpha_i}(t)\] and define \[ h_{\alpha_i}(t)\ = \  w_{\alpha_i}(t) w_{\alpha_i}(1)^{-1} .\]
Each simple root $\alpha_j$ defines a character on $\{ h_{\alpha_i}(t)|t\in  F^{\times}\}$ by
\begin{equation}\label{halphaialphajchar}
  h_{\alpha_i}(t)^{\alpha_j} \ = \ t^{\langle \alpha_j,\alpha_i^\vee\rangle}.
\end{equation}
The subgroup $\langle w_{\alpha_i}(1):i\in I\rangle$ of $\textrm{Aut}(V_{F})$ g   contains a full set of Weyl group representatives.
For a real root $\alpha$
we let $u_\alpha(s)$, $s\in F$, denote a choice of corresponding one-parameter subgroup, chosen so that
\begin{equation}\label{ualphareal}
  u_\alpha(s) \ \ = \ \ w u_{\alpha_i}(\pm s)w^{-1} \in \textrm{Aut}(V_{F}) \qquad (s \in F)
\end{equation}
for   $w=w_{\beta_1}(1)\cdots w_{\beta_\ell}(1)$ and
 $\alpha=w_{\beta_1}\cdots w_{\beta_\ell}\alpha_i$, for some $i\in I$, where $\beta_1,\ldots,\beta_\ell\in\Delta$.

  We let
$$G^0_{F}\ = \ \langle u_{\alpha_i}(s), u_{-\alpha_i}(t) : s,t\in F, \ i \in I \rangle\ \subset \ \textrm{Aut}(V_F).$$
Choose a  coherently ordered basis (see \cite[\S5]{CG})   $\Psi=\{v_1,v_2,\dots\}$ of $V_\Z$, and denote by $B^0_F$ the subgroup of $G^0_F$ consisting of  elements which act upper-triangularly   with respect to $\Psi$.
For $t \in \mathbb Z_{>0}$, we let $U_t$ be the span of the $v_s \in \Psi$ for $s \le t$. Then $B^0_F U_t \subseteq U_t$ for each $t$. Let $B_t$ be the image of $B^0_F$ in $\mathrm{Aut}(U_t)$. We then have surjective homomorphisms
\[ \pi_{tt'}: B_{t'} \longrightarrow B_t, \quad t' \ge t, \] which we use to define  $B_F$ as the projective limit of the projective family $\{ B_t, \pi_{tt'} \}$.  When the field $F$ is unspecified, $B$ is to be interpreted as $B_\R$.

We define a topology on $G^0_F$ by decreeing that a base of open neighborhoods of the identity is given by the sets  \[ V_t\ = \ \{ g \in G^0_F : g v_i =v_i, \ i=1, 2, \dots , t \}. \]
Let $G_F$ be the completion of $G^0_F$ with respect to this topology.  (See \cite{CLL} for more details.)  If $R\supset \Z$ is a subring of $F$, the subgroup $G_R\subset G_F$ is defined as the stabilizer of $V_\Z \otimes_\Z R$ in $G_F$.  For future reference, we define the following subgroups of $G_F$ which play an important role in the rest of the paper:
 \begin{itemize}
 \item  $A_F=\langle h_{\alpha_i}(s) : s\in F^\times, i \in I\rangle$; and
 \item  $U_F\subset B_F$ is defined exactly as $B_F$, but with the additional stipulation that elements act unipotently upper triangularly with respect to $\Psi$.  It contains all subgroups parameterized by the $u_\alpha(\cdot)$, where $\alpha\in \Phi_+$ is a real root.  Then $B_F=U_FA_F=A_FU_F$.  When no subscript is given, $U$ is to be interpreted as $U_\R$.
 \end{itemize}
 Additionally, the following  subgroups are specific to the situation $F=\R$:
 \begin{itemize}
\item $K$ is the subgroup of $G_\R$ generated by all  $\exp(t(e_i-f_i))$, $t\in \R$ and $i\in I$ \cite{KP}; and
    \item   $A^+=\langle h_{\alpha_i}(s) : s\in\mathbb{R}_{>0}, i \in I\rangle$.  In fact, $(\mathbb{R}_{>0})^r$ can be identified with $A^+$ via the isomorphism  $(x_1,\dots, x_r)\mapsto h_{\alpha_1}(x_1)\cdots h_{\alpha_r}(x_r)$, under which $A^+$ has the Haar measure $da$ corresponding to $\prod_{i=1}^r \frac{dx_i}{x_i}$.
\end{itemize}

\begin{theorem}[\cite{DGH}]
We have the Iwasawa decomposition
\begin{equation}\label{iwasawa}
G_\R \ = \  UA^+K,
\end{equation} with uniqueness of expression.
\end{theorem}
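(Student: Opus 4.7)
The plan is to prove existence and uniqueness via the action on a suitable integrable highest-weight module equipped with a positive-definite contravariant bilinear form, adapting the classical Gram--Schmidt argument to the projective-limit setting of Section~\ref{Zform}.

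First I would equip the highest-weight module $(\pi,V)$ with the canonical positive-definite $\mathbb R$-bilinear form $\{\cdot,\cdot\}$ determined by $\{\pi(e_i)u,v\}=\{u,\pi(f_i)v\}$ for $i\in I$ and $\{v_0,v_0\}=1$, where $v_0$ is a fixed highest weight vector of strictly dominant integral weight $\lambda_0$. The generators $\exp(t(e_i-f_i))$ of $K$ then act by orthogonal transformations, distinct weight spaces are $\{\cdot,\cdot\}$-orthogonal, the subgroup $U$ fixes $v_0$ modulo strictly lower weight vectors, and $A^+$ acts on $V_{\lambda_0}$ by strictly positive real scalars (generalizing (\ref{halphaialphajchar})). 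Hence $UA^+\cdot v_0$ is precisely the set of vectors in $V_{\mathbb R}$ whose $V_{\lambda_0}$-component is a strictly positive real multiple of $v_0$.

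For existence, given $g\in G_\R$, the aim is to construct $k\in K$ with $gk^{-1}\in UA^+$ by inducting up the filtration $\{U_t\}$ used to define $B_F$. At each step the relevant rank-one reduction is furnished by the $SL_2(\mathbb R)$-subgroup $\langle u_{\alpha_i}(s),u_{-\alpha_i}(t)\rangle$, which carries its own finite-dimensional Iwasawa decomposition; right-multiplying by an element of $K\cap SL_2(\mathbb R)$ clears the $u_{-\alpha_i}$-component modulo the next filtration level. Compatibility of these reductions under the projections $\pi_{tt'}$ together with completeness of $G_\R$ in its defining topology then permits passage to the projective limit, yielding the desired $k$.

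For uniqueness, suppose $u_1a_1k_1=u_2a_2k_2$. Then $k:=k_2k_1^{-1}=a_2^{-1}u_2^{-1}u_1a_1$; since $A^+$ normalizes $U$, this rewrites as $k=u'a$ with $u'\in U$ and $a:=a_2^{-1}a_1\in A^+$. Applying $k$ to $v_0$ gives $kv_0=a^{\lambda_0}v_0+(\text{strictly lower weight})$, and $K$-invariance of $\{\cdot,\cdot\}$ together with weight-space orthogonality forces $(a^{\lambda_0})^2\leq 1$; the same argument applied to $k^{-1}\in UA^+\cap K$ yields the reverse inequality, so $a^{\lambda_0}=1$. Running this over $r$ strictly dominant weights $\lambda_0$ spanning $\mathfrak h^*$ pins down $a=e$, whence $k=u'\in U\cap K$, and iterating the orthogonality argument over all highest weights forces $u'=e$. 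The main obstacle is the passage to the projective limit in the existence step: one must verify that the finite-level Gram--Schmidt reductions are compatible under the projections $\pi_{tt'}$ and that the resulting sequences of group elements actually converge in the topology on $G_\R$ defined by the $V_t$. This bookkeeping is the technical heart of [DGH], and it relies essentially on the preservation by $K$ of the filtration on $V_\R$.
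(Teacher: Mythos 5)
First, note that the paper does not prove this statement at all: it is imported wholesale from [DGH], so there is no ``paper's proof'' to compare against. Your sketch follows the representation-theoretic route of Kac--Peterson and Garland (contravariant form on a highest-weight module) rather than the building-theoretic route actually taken in [DGH]. Your uniqueness half is essentially the standard argument and is correct in outline, with two small corrections: $U$ fixes $v_0$ exactly (positive real root vectors annihilate the highest weight vector), not merely modulo lower-weight terms, so $kv_0=a^{\lambda_0}v_0$ on the nose and $a^{\lambda_0}=1$ follows immediately from orthogonality; and you do not need $r$ different modules --- applying the same inequality to the weight vectors of weight $\lambda_0-\alpha_i$ inside a single module with regular dominant highest weight already pins down $a=e$, after which orthogonality plus upper-unitriangularity forces $u'=e$ as you say.

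The genuine gap is in the existence half. The ``Gram--Schmidt up the filtration $\{U_t\}$'' idea does not transplant from $SL_n(\mathbb R)$: a QR-type factorization of the operator $\pi(g)$ on $V_\R$ produces an upper-triangular factor and an orthogonal factor of the \emph{vector space}, but $U A^+$ and $K$ are vastly smaller than the full upper-triangular and orthogonal groups of the infinite-dimensional space $V_\R$, so there is no reason the factors lie in the group. Relatedly, ``clearing the $u_{-\alpha_i}$-component'' presupposes a Gauss-type decomposition of $g$ into lower- and upper-triangular parts, which holds only on the big Birkhoff cell and fails for general $g$. Finally, your closing claim that the argument ``relies essentially on the preservation by $K$ of the filtration on $V_\R$'' is false: $K$ contains $\exp(t(e_i-f_i))$, which mixes weights in both directions and does not preserve the flag $\{U_t\}$. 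The standard existence proof is quite different: one shows that the set $S=UA^+K$ is stable under left multiplication by each rank-one subgroup $G_i=\langle u_{\pm\alpha_i}(t)\rangle$, by writing $U=U_i\,U^{(i)}$ with $U^{(i)}$ normalized by $G_i$, invoking the classical Iwasawa decomposition $G_i=U_iA_i^+K_i$ with $K_i\subset K$, and reassembling; since the $G_i$ generate $G^0_\R$ and $e\in S$, this gives $G^0_\R\subseteq S$, and one then passes to the completion. This step --- not bookkeeping over the projections $\pi_{tt'}$ --- is the actual content of the theorem, and your proposal does not supply it.
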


We let $u(g)$, $a(g)$, and $k(g)$ denote the projections   from $G_{\R}$ onto  each of the respective factors in (\ref{iwasawa}).
We define the discrete group
$\Gamma = G_\Z$ as $G_\R\cap \mathrm{Aut}(V_{\mathbb{Z}})=\{\gamma\in G_\R : \gamma\cdot V_{\mathbb{Z}}= V_{\mathbb{Z}}\}.$
As in \cite{G04}, it can be shown that $(\Gamma\cap U) \backslash U$ is the projective limit of a projective family of finite-dimensional compact nil-manifolds  and thus admits
a projective limit measure $du$,  a right $U$-invariant probability measure.

\section{Convergence of the constant term}\label{constant-term}

Using the identification of $A^+$ with $(\R_{>0})^r$, each element of $\frak h^*$ gives rise to a quasicharacter of $A^+$ by (\ref{halphaialphajchar}) and linearity. Let $\lambda\in \frak h^*$ and let $\rho\in\mathfrak{h}^*$ be  the Weyl vector, which is characterized by $\langle\rho,\alpha_i^\vee\rangle =1$, $i\in I$.  We set
  \begin{equation}\label{Philambdadef}
    \aligned
    \Phi_{\lambda}:& \ G_\R\to {\mathbb C}^{\times} \\
    \Phi_{\lambda}:& \ g  \mapsto a(g)^{\lambda+\rho},
    \endaligned
  \end{equation}
   which is well-defined by the uniqueness of the Iwasawa decomposition. Clearly, $\Phi_{\lambda}$ is left $U$-invariant and right $K$-invariant.

 Let $B$ and $\Gamma$ be   as defined in Section \ref{Zform}.
 Define the Eisenstein series on $G_\R$ to be the infinite formal sum
\[
E_{\lambda}(g)\quad = \sum_{\gamma\in (\Gamma\cap{B})\backslash \Gamma}
\Phi_{\lambda}(\gamma g).
\]
Assume first that  $\lambda$ is a real linear combination of the $\alpha_i$, $i\in I$, so that $\Phi_\lambda>0$. Then we may interpret the infinite sum $E_\lambda(g)$ as a function taking values in $(0,\infty]$. Moreover, the function $E_\lambda$ may be regarded as a function on
\[
(\Gamma\cap U)\backslash G_\R/ K  \ \ \cong \ \ (\Gamma\cap U)\backslash U \times A^+
\]
by the Iwasawa decomposition (\ref{iwasawa}).
We define for all $g\in G_\R$ the so-called ``upper triangular'' constant term\footnote{This is to be distinguished from the proposal in \cite{BK} to consider constant terms in uncompleted, or so-called ``lower triangular'', parabolics.}
\[
E^\sharp_\lambda(g)\ = \ \int_{(\Gamma\cap U)\backslash U}E_\lambda(ug)du,
\]
which is left $U$-invariant and right $K$-invariant. In particular $E^\sharp_\lambda(g)$ is determined by the $A^+$-component $a(g)$ of $g$ in the Iwasawa decomposition.
Applying the Gindikin--Karpelevich formula,  a formal calculation as in \cite{G04} yields that
\begin{equation} \label{eee}
E^\sharp_\lambda(g)  \ = \ \sum_{w\in W}a(g)^{w\lambda+\rho}c(\lambda,w)\, , \ \ \text{with} \ \  c(\lambda,w) \ = \ \prod_{ \alpha \in \Phi_w}\frac{\xi(\langle\lambda, \alpha^\vee\rangle)}{\xi(1+\langle\lambda, \alpha^\vee\rangle)}\,,
\end{equation}
where
\begin{equation}\label{Phiwdef}
  \Phi_w \ = \ \Phi_+\cap w^{-1}\Phi_{-}, \ w\in W,
\end{equation}
  and $\xi(s)$ is the completed Riemann $\zeta$-function
\[
\xi(s)\ = \ \Gamma_\mathbb{R}(s)\zeta(s),
\]
where $\Gamma_\mathbb{R}(s)=\pi^{-s/2}\Gamma(s/2)$.  We shall often make use of the following explicit parametrization of $\Phi_w$, where $w$ is written as a reduced word (i.e., minimal length) $w=w_{i_1}w_{i_2}\cdots w_{i_\ell}$ in the generators $\{w_i : i\in I\}$ of $W$:
\begin{equation}\label{Phiwparam}
  \Phi_w \ \ = \ \ \{\alpha_{i_\ell},\, w_{i_\ell}\alpha_{i_{\ell-1}},\,
  w_{i_\ell}w_{i_{\ell-1}}\alpha_{i_{\ell-2}},\ldots, w_{i_\ell}\cdots w_{i_2}\alpha_{i_1}\}
\end{equation}
(see \cite[Lemma~1.3.14]{Kumar}).

Returning to (\ref{eee}), we now state an elementary estimate on the Riemann $\zeta$-function from analytic number theory.
\begin{lemma} \label{zeta} One has
\[
\lim_{\sigma\to\infty}\max_{t\in\R}\left|\frac{\xi(\sigma+it)}{\xi(\sigma+1+it)}\right|=0.
\]
\end{lemma}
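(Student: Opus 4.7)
The plan is to factor $\xi(s) = \pi^{-s/2}\Gamma(s/2)\zeta(s)$ and handle the three resulting parts of
\[
\frac{\xi(\sigma+it)}{\xi(\sigma+1+it)} \ = \ \pi^{1/2} \cdot \frac{\Gamma((\sigma+it)/2)}{\Gamma((\sigma+1+it)/2)} \cdot \frac{\zeta(\sigma+it)}{\zeta(\sigma+1+it)}
\]
independently.  The constant $\pi^{1/2}$ is harmless, so matters reduce to the Gamma ratio and the zeta ratio.

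For the zeta ratio, I would invoke the Dirichlet series $\zeta(s)=\sum_{n\geq 1} n^{-s}$, valid absolutely for $\operatorname{Re}(s)>1$, which yields the uniform-in-$t$ bound $|\zeta(\sigma+it)-1|\leq \zeta(\sigma)-1$.  As $\sigma\to\infty$ this forces both numerator and denominator to approach $1$ uniformly in $t$, so the zeta ratio tends uniformly to $1$, and in particular is uniformly bounded.

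The main content is the Gamma factor, where uniformity in $t$ is the crux.  The cleanest tool is the Beta integral
\[
B(s,1/2) \ = \ \frac{\Gamma(s)\,\Gamma(1/2)}{\Gamma(s+1/2)} \ = \ \int_0^1 u^{s-1}(1-u)^{-1/2}\,du,
\]
valid for $\operatorname{Re}(s)>0$.  Estimating the integrand in absolute value yields
\[
\left|\frac{\Gamma(s)}{\Gamma(s+1/2)}\right| \ \leq \ \frac{\Gamma(\operatorname{Re}(s))}{\Gamma(\operatorname{Re}(s)+1/2)},
\]
an inequality entirely independent of $\operatorname{Im}(s)$.  Specializing $s=(\sigma+it)/2$, so that $\operatorname{Re}(s)=\sigma/2>0$, gives
\[
\max_{t\in\R}\left|\frac{\Gamma((\sigma+it)/2)}{\Gamma((\sigma+1+it)/2)}\right| \ \leq \ \frac{\Gamma(\sigma/2)}{\Gamma((\sigma+1)/2)},
\]
and the standard Stirling asymptotic $\Gamma(x+a)/\Gamma(x)\sim x^a$ shows the right-hand side is $O(\sigma^{-1/2})\to 0$.

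Combining the three estimates yields $\max_{t\in\R}|\xi(\sigma+it)/\xi(\sigma+1+it)|=O(\sigma^{-1/2})$, which is in fact stronger than the stated claim.  The only nontrivial step is the uniform-in-$t$ control of the Gamma ratio, and it is precisely for this that the Beta-function representation is ideal: a direct appeal to complex Stirling would have to deal with the large-imaginary-part regime carefully, whereas the Beta integral reduces everything to a real-variable Gamma quotient in one line.
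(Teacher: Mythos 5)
Your argument is correct and follows essentially the same route as the paper: factor $\xi$ into its $\Gamma_\R$- and $\zeta$-parts, show the $\zeta$-ratio tends to $1$ uniformly in $t$ (you via the Dirichlet series, the paper via the Euler product --- the same estimate in substance), and control the $\Gamma$-ratio uniformly in $t$ by passing absolute values through a positive integral representation (you use the Beta integral, the paper the integral $\int_\R(1+u^2)^{-(s+1)/2}\,du$). Your Beta-integral step is clean and even yields the quantitative rate $O(\sigma^{-1/2})$, slightly stronger than what the lemma asserts.
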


\begin{proof}
It follows from the Euler product $\zeta(s)=\prod_p\frac{1}{1-p^{-s}}$ that
\[
\log\zeta(s) \ = \ - \sum_p \log(1-p^{-s}) \ = \ \sum_{n\geq 2} c_n n^{-s},
\]
where
\[
c_n \ = \ \left\{\begin{array}{ll} 1/k & \text{if }n=p^k \text{ for prime } p, \\ 0 & \text{otherwise}.\end{array}\right.
\]
In particular $|c_n|\leq 1$, hence
\[
|\log\zeta(\sigma+it)| \ \leq \
\sum_{n\geq 2}|c_n| n^{-\sigma} \ \leq  \ \sum_{n\geq 2} n^{-\sigma} \ \longrightarrow \  0,\quad \text{as}\quad \sigma\to\infty,
\]
e.g., by dominated convergence.
Therefore $\lim\limits_{\sigma\to\infty}\zeta(\sigma+it)=1$ uniformly in $t\in \R$. On the other hand,  it is easy to see  from standard properties of $\Gamma$-functions (or  applying dominated convergence to the integral formula \eqref{gk}) that
\[
\lim_{\sigma\to\infty}\frac{\Gamma_\mathbb{R}(\sigma+it)}{\Gamma_\mathbb{R}(\sigma+it+1)} \ = \ 0,
\]
again uniformly in $t\in \R$.  Thus the Lemma follows by multiplying these two estimates.
 \end{proof}

Let $\mathcal{C} \subset\mathfrak{h}$ be the open fundamental chamber
\[
\mathcal{C} \ = \ \{x\in \mathfrak{h}: \langle\alpha_i, x\rangle>0,~  i\in I\}.
\]
   Let $\mathfrak{C}$ denote the interior of the Tits cone $\mathop{\cup}_{w \in W}  w\overline{  \mathcal C}$   corresponding to $\mathcal{C}$ \cite[\S3.12]{K}.  Using the exponential map $\exp: \mathfrak{h}\to A^+$, set $A_\mathcal{C}=\exp \mathcal{C}$ and $A_\mathfrak{C}=\exp \mathfrak{C}$.
Let
\begin{equation} \label{C-star}
\mathcal{C}^* \ = \ \{\lambda\in\mathfrak{h}^*: \langle\lambda,\alpha_i^\vee\rangle>0, ~ i\in I\}.
\end{equation}
Let $\mathcal{K}$ be a compact subset of $\mathfrak{C}$ and $\mu \in P \cap \mathcal{C}^*$. We define  $A_{\mathcal{K},\mu}(N)$ to be the number of $\mu'$ in the Weyl orbit $W\cdot \{\mu\}$ whose maximum on $\mathcal{K}$ is $\geq -N$.  We now recall a lemma due to Looijenga \cite{Lo} (which is contained in   Lemma 3.2 and the beginning of the proof of Proposition 3.4 there -- see also \cite[\S10.6]{K}).

\begin{lemma}\label{lo} We have  $A_{\mathcal{K},\mu}(N)=O(N^r)$ as $N\to\infty$.
Furthermore, for $\lambda\in\mathfrak{h}^*_\mathbb{C}$ with $\operatorname{Re}(\lambda)\in \mathcal{C}^*$,  $\sum_{w\in W} a^{w\lambda}$ converges absolutely and uniformly for   $a$ in any fixed compact subset of $A_\mathfrak{C}$.
\end{lemma}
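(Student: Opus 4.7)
The plan is to quote the counting bound $A_{\mathcal{K},\mu}(N)=O(N^r)$ from Looijenga's paper \cite{Lo} and then derive the convergence of $\sum_{w\in W}a^{w\lambda}$ by an Abel-summation argument keyed to level sets of the support function of $\mathcal{K}$.

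For the counting bound itself, I would invoke \cite[Lemma~3.2]{Lo} directly (see also \cite[\S10.6]{K}). The geometric heart of that argument is the proper discontinuity of $W$ on the open Tits cone $\mathfrak{C}$: any compact $\mathcal{K}\subset\mathfrak{C}$ meets only finitely many Weyl chambers $w\overline{\mathcal{C}}$. From this one sees that the support function $h_\mathcal{K}(\mu'):=\max_{y\in\mathcal{K}}\langle\mu',y\rangle$ tends to $-\infty$ along any sequence in the discrete orbit $W\cdot\mu$, and that the region $\{\mu'\in W\cdot\mu:h_\mathcal{K}(\mu')\geq -N\}$ is confined to an $r$-dimensional polyhedral slab whose width grows linearly in $N$; counting lattice-like points inside yields the $O(N^r)$ bound.

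For the convergence statement, fix a compact $\mathcal{K}\subset\mathfrak{C}$, set $\lambda_0=\operatorname{Re}(\lambda)\in\mathcal{C}^*$, and choose an integral weight $\mu\in P\cap\mathcal{C}^*$ together with a constant $c>0$ such that $\lambda_0-c\mu\in\overline{\mathcal{C}^*}$; this is possible because $\mathcal{C}^*$ is open. For $x\in\mathcal{K}$ lying in a chamber $w_0\overline{\mathcal{C}}$, the standard dominance inequality $\langle w'\nu,y\rangle\leq\langle\nu,y\rangle$ (valid whenever $\nu\in\overline{\mathcal{C}^*}$ and $y\in\overline{\mathcal{C}}$, since $\nu-w'\nu$ is a non-negative integral combination of simple roots) applied to $y=w_0^{-1}x$ yields a uniform bound on $\langle w(\lambda_0-c\mu),x\rangle$ as $w$ varies over $W$. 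Writing $a=\exp(x)$, this gives
\[
|a^{w\lambda}| \ = \ e^{\langle w\lambda_0,x\rangle} \ \leq \ C(\mathcal{K},\lambda_0,\mu)\,e^{c\,h_\mathcal{K}(w\mu)}.
\]
Grouping $W$ into a finite head $\{w:h_\mathcal{K}(w\mu)\geq 0\}$ together with shells $S_N=\{w:h_\mathcal{K}(w\mu)\in[-N,-N+1)\}$ for $N\geq 1$, and using $|S_N|\leq A_{\mathcal{K},\mu}(N)=O(N^r)$, the sum is dominated by a convergent series of the form $C_0+C_1\sum_{N\geq 1}N^r e^{-cN}$, with constants depending only on $\mathcal{K}$, $\lambda_0$, and $\mu$. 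Hence the series $\sum_{w\in W}a^{w\lambda}$ converges absolutely and uniformly in $x\in\mathcal{K}$.

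The main obstacle is the counting estimate, which is the polyhedral-geometric core of the lemma; once it is in hand, the rest is mechanical. A secondary subtlety is the passage from general $\operatorname{Re}(\lambda)\in\mathcal{C}^*$ to the integral $\mu$ required by Looijenga's statement, but the domination $\lambda_0-c\mu\in\overline{\mathcal{C}^*}$ handles this cleanly.
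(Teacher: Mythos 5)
Your proposal is correct and follows exactly the route the paper intends: the paper gives no proof of Lemma~\ref{lo} at all, simply citing Looijenga's Lemma~3.2 for the counting bound and the start of his proof of Proposition~3.4 for the convergence (see also \cite[\S10.6]{K}), and your shell decomposition with the dominance inequality $\langle w'\nu,y\rangle\le\langle\nu,y\rangle$ is precisely that standard deduction. The only points worth making explicit are that a compact $\mathcal{K}\subset\mathfrak{C}$ meets only finitely many chambers $w_0\overline{\mathcal{C}}$ (which you do note, and which makes your constant $C(\mathcal{K},\lambda_0,\mu)$ uniform) and that $w\mapsto w\mu$ is injective because $\mu\in\mathcal{C}^*$ is regular, so $|S_N|\le A_{\mathcal{K},\mu}(N)$ is legitimate.
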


We apply Lemma~\ref{zeta} and Lemma~\ref{lo} to show the following convergence result:

\begin{theorem} \label{3.3}
If   $\lambda\in\mathfrak{h}^*_\mathbb{C}$ satisfies $\operatorname{Re}(\lambda  - \rho) \in \mathcal{C}^*$, then
$E^\sharp_\lambda(g)$ given in \eqref{eee} converges absolutely for $g\in UA_\mathfrak{C}K$, and in fact uniformly for $a(g)$ lying in any fixed compact subset of $A_\mathfrak{C}$.
\end{theorem}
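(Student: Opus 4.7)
The plan is to dominate the series $\sum_{w \in W} a(g)^{w\lambda+\rho}\,c(\lambda, w)$ termwise by a series controlled by Lemma \ref{lo}. Writing $a = a(g)$, we have $a^{w\lambda+\rho} = a^\rho \cdot a^{w\lambda}$ with $|a^{w\lambda}| = a^{w\operatorname{Re}(\lambda)}$, since $W$ acts $\mathbb{R}$-linearly on $\mathfrak{h}^*$. The hypothesis $\operatorname{Re}(\lambda-\rho)\in\mathcal{C}^*$ gives $\operatorname{Re}\langle\lambda,\alpha_i^\vee\rangle > 1$ for each $i\in I$, so in particular $\operatorname{Re}(\lambda)\in\mathcal{C}^*$ and Lemma \ref{lo} already delivers
\[
\sum_{w\in W} a^{w\operatorname{Re}(\lambda)} \ < \ \infty,
\]
uniformly for $a$ in compact subsets of $A_\mathfrak{C}$. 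The remaining task is to establish a uniform bound $|c(\lambda,w)| \leq C$, independent of $w \in W$.

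To obtain this I would split the product defining $c(\lambda,w)$ according to the height of the associated coroot. Set $\delta := \min_{i\in I}(\operatorname{Re}\langle\lambda,\alpha_i^\vee\rangle - 1) > 0$. Because every positive real coroot $\alpha^\vee = \sum_i n_i \alpha_i^\vee$ has non-negative integer coefficients, one has
\[
\operatorname{Re}\langle\lambda,\alpha^\vee\rangle \ \geq \ (1+\delta)\operatorname{ht}(\alpha^\vee).
\]
Given any $M > 1$, Lemma \ref{zeta} produces some $\sigma_0$ with $|\xi(s)/\xi(s+1)| \leq 1/M$ whenever $\operatorname{Re}(s) \geq \sigma_0$. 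Taking $h_0 := \lceil \sigma_0/(1+\delta)\rceil$, every coroot of height exceeding $h_0$ contributes a factor of size at most $1/M$. The positive real coroots of height $\leq h_0$ form a finite set, of some cardinality $N_0$; since $\operatorname{Re}(1+\langle\lambda,\alpha^\vee\rangle) > 2$ stays away from all zeros and poles of $\xi$, the factors $|\xi(\langle\lambda,\alpha^\vee\rangle)/\xi(1+\langle\lambda,\alpha^\vee\rangle)|$ attached to this finite set admit a common upper bound $R < \infty$. Since $\Phi_w$ contains at most $N_0$ small-height coroots, the other $\max(0,\ell(w)-N_0)$ factors are each bounded by $1/M$, yielding
\[
|c(\lambda,w)| \ \leq \ R^{N_0}\, M^{-\max(0,\,\ell(w)-N_0)} \ \leq \ R^{N_0}
\]
uniformly in $w$. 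Running the same argument for arbitrary $M > 0$ gives the slightly stronger estimate $M^{\ell(w)}|c(\lambda,w)| \leq (RM)^{N_0}$, the ``observation'' mentioned in the introduction.

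Combining the two bounds, $\sum_{w\in W}\bigl|a^{w\lambda+\rho}\,c(\lambda,w)\bigr| \leq R^{N_0}\, a^\rho \sum_{w\in W} a^{w\operatorname{Re}(\lambda)}$, and Lemma \ref{lo} finishes the proof, with uniformity over compact subsets of $A_\mathfrak{C}$ inherited from the right-hand side. The main obstacle is the uniform bound on $|c(\lambda,w)|$: the length $\ell(w)$ is unbounded, so a priori a length-$\ell(w)$ product of ratios of $\xi$-values could grow geometrically, but the uniform decay in Lemma \ref{zeta} combined with the fact that only finitely many positive real coroots have bounded height shows that no matter how long $w$ is, all but a bounded number of factors in $c(\lambda,w)$ have been pushed into the regime $|\xi/\xi| \leq 1/M$. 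The positivity of positive real coroots in the simple coroot basis is the essential structural ingredient that turns an unbounded-length product into one controlled by the finite set of ``small-height'' exceptions.
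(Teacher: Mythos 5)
Your proposal is correct and follows essentially the same route as the paper: reduce to real $\lambda$ by taking absolute values, use Lemma~\ref{zeta} to show $|c(\lambda,w)|$ is uniformly bounded in $w$ (because $\operatorname{Re}\langle\lambda,\alpha^\vee\rangle$ grows with the coroot height, so all but finitely many factors are $\le 1$), and then invoke Lemma~\ref{lo} for the convergence of $\sum_{w}a^{w\operatorname{Re}(\lambda)}$. Your height-based bookkeeping simply makes explicit the paper's assertion that $\langle\lambda,\alpha^\vee\rangle>S$ for all but finitely many positive roots, and your refinement $M^{\ell(w)}|c(\lambda,w)|\le (RM)^{N_0}$ is exactly the observation the paper records for later use.
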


\begin{proof}
By applying absolute values, we may assume without loss of generality that $\lambda$ is real.
Let $S \ge 1$ be a constant such that $\left|\frac{\xi(s)}{\xi(s+1)}\right|\leq 1$ when $\operatorname{Re}(s)>S$, which exists by Lemma \ref{zeta}. Since $\langle\lambda, \alpha_i^\vee\rangle>1$ for all $i\in I$, we see that
$\langle\lambda,\alpha^\vee\rangle>S$ for all but finitely many positive roots $\alpha$.
 Therefore $c(\lambda,w)$ is bounded in $w\in W$, and  the convergence follows from that of $\sum_{w\in W}a^{w\lambda}$ in Lemma~\ref{lo}.
\end{proof}

By applying Tonelli's theorem as in \cite[\S9]{G04}, we obtain the following result for arbitrary Kac--Moody groups.

\begin{corollary}\label{cor}
For $\lambda\in\mathfrak{h}^*_\mathbb{C}$ with $\operatorname{Re}(\lambda -\rho) \in \mathcal{C}^*$ and any  compact subset $\mathfrak{S}$ of $A_\mathfrak{C}$, there exists a measure zero subset $S_0$ of $(\Gamma\cap U)\backslash U\mathfrak{S}$ such that the series $E_\lambda(g)$ converges absolutely for $g\in U\mathfrak{S}K$ off the set $S_0K$.
\end{corollary}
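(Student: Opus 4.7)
The plan is to deduce almost-everywhere absolute convergence of $E_\lambda(g)$ on $U\mathfrak{S}K$ from the integrability of its constant term $E^\sharp_\lambda$ on $\mathfrak{S}$, which Theorem~\ref{3.3} has already supplied. Because only absolute convergence is at stake and $|\Phi_\lambda(\gamma g)| = a(\gamma g)^{\operatorname{Re}(\lambda)+\rho}$, we may replace $\lambda$ by its real part throughout and treat $E_\lambda$ as a $[0,\infty]$-valued measurable function on $G_\R$.

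First I would regard $E_\lambda(ua)$ as a non-negative measurable function on the product measure space $X := ((\Gamma\cap U)\backslash U) \times \mathfrak{S}$, equipped with the product of the projective-limit probability measure $du$ from the end of Section~\ref{Zform} and the restriction of the Haar measure $da$ to the compact set $\mathfrak{S}$. Tonelli's theorem, which needs no a priori integrability hypothesis for non-negative integrands, yields
\[
\int_X E_\lambda(ua)\,du\,da \;=\; \int_\mathfrak{S}\Biggl(\int_{(\Gamma\cap U)\backslash U} E_\lambda(ua)\,du\Biggr)\,da\,.
\]
The inner integral is the constant term; the standard Gindikin--Karpelevich unfolding carried out in \cite[\S9]{G04} --- itself a Tonelli rearrangement of the non-negative double sum over the cosets $(\Gamma\cap B)\backslash \Gamma$ and the unipotent integration variable --- identifies it with the Weyl-group series $E^\sharp_\lambda(a)$ of (\ref{eee}). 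By Theorem~\ref{3.3}, this series is bounded uniformly in $a\in\mathfrak{S}$, and since $\mathfrak{S}$ has finite Haar measure, the outer integral is finite.

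Hence $\int_X E_\lambda\,du\,da<\infty$, and a non-negative function with finite integral is finite off a measurable set $S_0\subseteq X$ of measure zero, which is the desired subset once we identify $X$ with $(\Gamma\cap U)\backslash U\mathfrak{S}$ via $(u,a)\mapsto ua$. Right $K$-invariance of each summand $\Phi_\lambda(\gamma\,\cdot\,)$ then propagates this to all of $U\mathfrak{S}K$: the divergence locus there is precisely the saturation $S_0K$, which remains null. The only nontrivial technical point --- and the honest ``main obstacle'' --- is justifying the Gindikin--Karpelevich unfolding, but applied to the non-negative series it is purely a Tonelli-style permutation of sums and integrals with no convergence hypothesis needed, and follows verbatim from Garland's treatment in \cite[\S9]{G04}.
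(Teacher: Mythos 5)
Your argument is correct and is precisely the route the paper takes: it proves the corollary in one line by "applying Tonelli's theorem as in \cite[\S9]{G04}," and your write-up simply fills in the details of that Tonelli/unfolding argument together with the use of Theorem~\ref{3.3} and right $K$-invariance.
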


For later use, we further strengthen Lemma~\ref{lo} and prove that

\begin{theorem} \label{mainthm}
Assume that $\lambda\in\mathfrak{h}^*_\mathbb{C}$ with $\operatorname{Re}(\lambda)\in \mathcal{C}^*$. Then for any $M>0$,
\[
\sum_{w\in W} M^{\ell(w)} a^{w\lambda}
\]
converges absolutely and uniformly for $a$ in any fixed compact subset of $A_\mathfrak{C}$.
\end{theorem}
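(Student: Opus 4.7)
The plan is to bound $\sum_w M^{\ell(w)} a^{w\lambda}$ termwise by a series to which Lemma~\ref{lo} directly applies, via a shift of the Iwasawa torus coordinate. By taking absolute values I may assume $\lambda$ is real; the case $M \leq 1$ then follows at once from Lemma~\ref{lo} because $M^{\ell(w)} \leq 1$. So fix $M > 1$ and let $c_\lambda := \min_{i} \langle \lambda, \alpha_i^\vee\rangle > 0$.

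The central input is an elementary length estimate. Let $\rho^\vee \in \mathfrak{h}$ be the unique element satisfying $\langle \alpha_i, \rho^\vee\rangle = 1$ for each $i$ (well-defined since $A$ is nonsingular). For a reduced expression $w = w_{i_1}\cdots w_{i_\ell}$, the telescoping identity
\[
\lambda - w^{-1}\lambda \ = \ \sum_{k=1}^{\ell} \langle \lambda, \alpha_{i_k}^\vee\rangle \cdot w_{i_\ell}\cdots w_{i_{k+1}} \alpha_{i_k}
\]
exhibits $\lambda - w^{-1}\lambda$ as a sum indexed by the $\ell(w)$ positive real roots of $\Phi_w$ enumerated in (\ref{Phiwparam}). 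Pairing with $\rho^\vee$ and using that every positive real root has height $\geq 1$ yields $c_\lambda\, \ell(w) \leq \langle \lambda - w^{-1}\lambda, \rho^\vee\rangle$; since $\ell(w^{-1}) = \ell(w)$, the same bound holds with $w$ in place of $w^{-1}$.

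Setting $t := (\log M)/c_\lambda > 0$ and writing $a = \exp(x)$, this length bound immediately gives the pointwise estimate
\[
M^{\ell(w)} a^{w\lambda} \ \leq \ e^{t\langle \lambda, \rho^\vee\rangle} \cdot e^{\langle w\lambda,\, x - t\rho^\vee\rangle} \ = \ C_{M,\lambda}\, (a')^{w\lambda},
\]
where $a' := \exp(x - t\rho^\vee)$. Summing over $w$ and applying Lemma~\ref{lo} to the weight $\lambda$ with the shifted argument $a'$ closes the proof, provided $a'$ ranges over a compact subset of $A_\mathfrak{C}$ as $a$ ranges over the given compact set.

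The main obstacle is precisely this last containment: for large $M$ (hence large $t$) the translated compact $\log(\mathcal{K}_A) - t\rho^\vee$ threatens to exit the Tits cone interior $\mathfrak{C}$, which is a proper open convex cone in the non-affine setting. I would address this by first using the $W$-equivariance of the original sum to reduce to compacts inside $\overline{\mathcal{C}} \cap \mathfrak{C}$---acquiring at most a uniformly bounded factor $M^{\ell(w_0)}$ where $w_0$ carries $x$ to the closed fundamental chamber---and then exploiting the convex cone structure of $\mathfrak{C}$ (Kac, Proposition~3.12) together with $\rho^\vee \in \mathcal{C} \subset \mathfrak{C}$ to verify that the required shift remains inside $\mathfrak{C}$.
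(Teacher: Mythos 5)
Your length estimate is correct and is a nice observation: the telescoping identity together with (\ref{Phiwparam}) does give $c_\lambda\,\ell(w)\le\langle\lambda-w\lambda,\rho^\vee\rangle$ (your $\rho^\vee$ is the paper's $H_\rho$), and hence the pointwise bound $M^{\ell(w)}a^{w\lambda}\le C_{M,\lambda}\,e^{\langle w\lambda,\,x-t\rho^\vee\rangle}$ with $t=(\log M)/c_\lambda$. The gap is exactly where you locate it, and the repair you sketch does not close it. For any positive root $\alpha=\sum_i m_i\alpha_i$ one has $\langle\alpha,x-t\rho^\vee\rangle=\langle\alpha,x\rangle-t\,\mathrm{ht}(\alpha)\le\bigl(\max_i\langle\alpha_i,x\rangle-t\bigr)\mathrm{ht}(\alpha)$, so as soon as $t>\max_i\langle\alpha_i,x\rangle$ the point $x-t\rho^\vee$ pairs negatively with \emph{every} positive root; since there are infinitely many of them, it lies outside the Tits cone altogether by \cite[Prop.~3.12(c)]{K}. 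This happens even for $x$ deep inside the fundamental chamber (take $x=s\rho^\vee$: then $x-t\rho^\vee=(s-t)\rho^\vee\notin\mathfrak{C}$ for $t>s$), so the reduction to $\overline{\mathcal C}\cap\mathfrak{C}$ via $W$-equivariance does not help. Convexity of $\mathfrak{C}$ protects you when you \emph{add} a multiple of $\rho^\vee$, not when you subtract one. As written, your argument proves the theorem only for $M$ bounded in terms of the compact set (namely $\log M\le c_\lambda\min_{x\in\log\mathcal{K}}\max_i\langle\alpha_i,x\rangle$, roughly), not for arbitrary $M$.

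The missing idea is to shift by only a \emph{small} amount $\varepsilon\rho^\vee$ --- as much as $a$ permits while keeping $a_1=a e^{-\varepsilon\rho^\vee}$ inside $A_{\mathfrak{C}}$ --- and to beat the fixed constant $M$ not uniformly root-by-root but \emph{eventually}: since $\langle\alpha,\rho^\vee\rangle=\mathrm{ht}(\alpha)\to\infty$ and $a_1^{\alpha}\ge1$ for all but finitely many $\alpha\in\Phi_+$, one gets $a^{s\alpha}=a_1^{s\alpha}e^{\varepsilon s\,\mathrm{ht}(\alpha)}>M$ for all but finitely many positive roots, and the finitely many exceptions are absorbed into a constant. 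This is what the paper's proof does, after first splitting $\lambda=s\rho+\lambda'$ with $\lambda'\in\mathcal{C}^*$ and using $M^{\ell(w)}a^{ws\rho+w\lambda'}\le\frac12\bigl(M^{2\ell(w)}a^{2ws\rho}+a^{2w\lambda'}\bigr)$, so that the factor $M^{\ell(w)}$ only needs to be absorbed along the $\rho$-direction, where $\rho-w\rho$ is exactly the sum of the $\ell(w)$ roots of $\Phi_{w^{-1}}$. If you replace your single global shift by this ``small shift plus finitely many exceptional roots'' device, your telescoping inequality can be made to work; as it stands, the final containment fails and the proof is incomplete.
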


\begin{proof}
We take advantage of the fact that the constraints on both $\lambda$ and $a$ are preserved under small perturbations.  Let $s>0$ be sufficiently small so that $\lambda':=\lambda-s\rho\in \mathcal{C}^*$. Then
\[
M^{\ell(w)}a^{w\lambda} \ = \ M^{\ell(w)}a^{ws\rho+w\lambda'} \ \leq \  \frac{1}{2} \left(M^{2\ell(w)}a^{2ws\rho}+a^{2 w\lambda'}\right).
\]
Since the convergence of $\sum_{w\in W}a^{2 w\lambda'}$  is handled  by Lemma~\ref{lo}, it suffices to prove the absolute and uniform convergence of
\[
\sum_{w\in W}M^{2\ell(w)}a^{2ws\rho} \ =  \sum_{w\in W}T^{\ell(w)} a^{2ws\rho}\,, \ \ \ T=M^2\,,
\]
for $a$ in any fixed compact subset of $A_\mathfrak{C}$, for any $T, s>0$.

Using the fact that
\begin{equation}\label{rhominuswrho}
 \rho-w\rho  \ = \ \sum_{\alpha\in \Phi_{w^{-1}}} \alpha
\end{equation}
is a sum of $\ell(w)$ positive roots (see (\ref{Phiwparam}) and \cite[1.3.22(3)]{Kumar}), we may write
  \[
 T^{\ell(w)} a^{2ws\rho}\  = \ a^{2s\rho} \prod_{\alpha\in \Phi_{w^{-1}}} Ta^{-2s\alpha}
 \  = \ a^{2s\rho} \prod_{\alpha\in \Phi_{w^{-1}}} (Ta^{-s\alpha})a^{-s\alpha}\,.
 \]
 We shall prove that $a^{s\alpha}>T$ for all but finitely many positive  roots $\alpha$ (in particular, those in $\Phi_{w^{-1}}$).   Assuming this momentarily, the parenthetical term $Ta^{-s\alpha}$ is bounded, and can only be greater than 1 for finitely many  positive roots $\alpha$.   Hence there exists a constant $D>0$ depending continuously on $a, s$ and $T$ such that
 \[
  T^{\ell(w)} a^{2ws\rho} \ \leq  \ D a^{2s\rho}\prod_{\alpha\in \Phi_{w^{-1}}} a^{-s\alpha} \ = \  Da^{2s\rho} a^{s(w\rho-\rho)} \ = \  Da^{s(w\rho+\rho)}\,.
 \]
 Hence
 \[
 \sum_{w\in W}T^{\ell(w)} a^{2ws\rho}\ \leq \  Da^{s\rho}\sum_{w\in W} a^{w(s\rho)}\,,
 \]
 and the Theorem follows from a second application of Lemma~\ref{lo}.

Finally, we return to the claim that $a^{s\alpha}>T$ for all but finitely many positive  roots $\alpha$.  Let $H_\rho$ denote the unique element of $\frak h$ such that $\langle \alpha_i,H_\rho\rangle = \langle \rho,\alpha_i^\vee \rangle = 1$
for all $i\in I$ (it exists by the assumed nondegeneracy of the Cartan matrix).
Since $a$ is assumed to lie in the open set $A_{\frak C}$, there exists some $\varepsilon>0$ such that $a=a_1a_2$, with $a_1$ also an element of  $A_{\frak C}$ and $a_2=e^{\varepsilon H_\rho}$.  Then
\begin{equation}\label{aa1a2}
  a^{s\alpha} \  = \ a_1^{s\alpha}\,a_2^{s\alpha} \ = \ a_1^{s\alpha}\, e^{\varepsilon s\langle \alpha,H_\rho\rangle}\,.
\end{equation}
  Since $a_1\in A_{\frak C}$, the values of $a_1^{\alpha}$ are at least 1 for all but finitely many $\alpha$ \cite[Prop.~3.12(c)]{K}.  At the same time, $\langle \alpha,H_\rho\rangle$ is the sum of the (nonnegative) coefficients of $\alpha$ when expanded as a linear combination of simple roots, and thus tends to infinity as $\alpha$ varies.  The claim now follows from (\ref{aa1a2}).
\end{proof}

 \section{Everywhere convergence of Eisenstein series} \label{Conv}

In the previous section we demonstrated the almost-everywhere absolute convergence of Eisenstein series.
In this section we will prove the absolute convergence of the Eisenstein series on $\Gamma UA_{\mathfrak C}K$ under Godement's criterion on the spectral parameter $\lambda$, subject to a condition (Property~\ref{conj}) on the root system of the Kac--Moody group. The key idea is that the Eisenstein series can be nearly bounded  by its constant term. 

We start with some    calculations for the group $SL(2,\mathbb{R})$.  It follows from direct computation that
\begin{equation}\label{asl2u}
a \left ( \begin{pmatrix} 0 & -1 \\ 1 & u \end{pmatrix}\right )^{\alpha}
 \ = \ \begin{pmatrix} \frac{1}{\sqrt{1+u^2}} & 0 \\ 0 & \sqrt{1+u^2}\end{pmatrix}^\alpha
\ = \ \frac{1}{1+u^2}\,,
\end{equation}
where $\alpha$ is the positive simple root of the diagonal Cartan.
Define
\begin{equation}\label{gk}
\aligned
c_\infty(s) \ & := \ \int_\mathbb{R}a \left ( \begin{pmatrix} 0 & -1 \\ 1 & u\end{pmatrix}\right )^{(s+1)\alpha/2}du  \\ & = \ \int_\mathbb{R}(1+u^2)^{-\frac{s+1}{2}}du \ = \ \frac{\Gamma_\mathbb{R}(s)}{\Gamma_\mathbb{R}(s+1)}\,, \  \  \operatorname{Re}(s)>0.
\endaligned
\end{equation}
Returning to the setting of a general Kac--Moody group $G_\R$, we  shall now assume that $\lambda\in \frak h^*_\C$ is actually real, i.e., $\lambda\in \frak h^*$, since this entails no loss of generality in considering absolute convergence.
Recall the notation   $u_\alpha(x)$ from  (\ref{ualphareal}).

\begin{lemma} \label{4.1}
Assume that  $\alpha$ is a positive simple root such that $\langle\lambda, \alpha^\vee\rangle>0$ (equivalently, $\langle \lambda+\rho,\alpha^\vee\rangle > 1)$. For any $x\in\mathbb{R}$ and $g=uak \in G_\mathbb{R}$, with $u\in U$, $a\in A^+$, and $k\in K$, we have
\[
\sum_{m\in\mathbb{Z}}a(w_\alpha u_\alpha(x+m)g)^{\lambda+\rho} \ \leq \  a^{w_\alpha(\lambda+\rho)}\left(2+a^\alpha c_\infty(\langle\lambda,\alpha^\vee \rangle)\right).
\]
In particular, for any $\epsilon>0$ there exists a constant $M=M_\epsilon>2$ such that
\begin{equation}\label{4.2new}
\sum_{m\in\mathbb{Z}}a(w_\alpha u_\alpha(x+m)g)^{\lambda+\rho} \ \leq \  M a^{w_\alpha(\lambda+\rho)}(1+a^\alpha)
\end{equation}
whenever $\langle \lambda,\alpha^\vee \rangle \ge \epsilon$, uniformly over $x\in\R$.
\end{lemma}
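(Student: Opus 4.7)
The plan is to reduce the estimate to a rank-one calculation inside the $SL_2$ subgroup $G_\alpha$ generated by $u_{\pm\alpha}$, and then estimate the resulting one-variable sum by splitting off two terms and dominating the rest by an integral.

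For the reduction, I use the decomposition $U=U_\alpha\cdot N_\alpha$, where $N_\alpha\subset U$ is the (normal, pro-unipotent) subgroup corresponding to $\Phi_+\setminus\{\alpha\}$, and write $u=u_\alpha(t)\,n$ with $n\in N_\alpha$. Since $\alpha$ is simple, $w_\alpha$ permutes $\Phi_+\setminus\{\alpha\}$ and hence normalizes $N_\alpha$, giving $w_\alpha u_\alpha(x+m)\,u=n'\,w_\alpha u_\alpha(x+m+t)$ for some $n'\in N_\alpha\subset U$. Next, commute $a$ past $u_\alpha$ via $u_\alpha(z)\,a=a\,u_\alpha(a^{-\alpha}z)$ and past $w_\alpha$ via $w_\alpha a=a^{w_\alpha}\,w_\alpha$, noting that $a^{w_\alpha}\in A^+$ since $A^+=\exp(\mathfrak{h}_\R)$ is $W$-invariant. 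Pulling the resulting $N_\alpha$-factor to the far left (where it does not affect the Iwasawa $a$-component) and discarding $k\in K$ on the right, I obtain
\[
a\bigl(w_\alpha u_\alpha(x+m)\,g\bigr)^{\lambda+\rho}\ =\ a^{w_\alpha(\lambda+\rho)}\,a\bigl(w_\alpha u_\alpha(z_m)\bigr)^{\lambda+\rho},\qquad z_m:=a^{-\alpha}(x+m+t).
\]
Since $w_\alpha u_\alpha(z_m)\in G_\alpha$, \eqref{asl2u} identifies its $A^+$-component with $h_\alpha\bigl((1+z_m^2)^{-1/2}\bigr)$, whose value at $\lambda+\rho$ equals $(1+z_m^2)^{-(\langle\lambda,\alpha^\vee\rangle+1)/2}$ by \eqref{halphaialphajchar}.

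Set $s:=\langle\lambda,\alpha^\vee\rangle>0$, $\beta:=a^{-\alpha}$, and $h(y):=(1+y^2)^{-(s+1)/2}$. The sum to estimate then equals $a^{w_\alpha(\lambda+\rho)}\sum_{m\in\Z}h(y_0+m\beta)$, where $y_0\in[0,\beta)$ is the reduction of $\beta(x+t)$ modulo $\beta$. The function $h$ is even and unimodal with $h(0)=1$, so the two terms $m=0,-1$ each contribute at most $1$. For $m\geq 1$ the interval $[y_0+(m-1)\beta,\,y_0+m\beta]$ lies in $[0,\infty)$ where $h$ is decreasing, so $h(y_0+m\beta)\leq \beta^{-1}\!\int_{y_0+(m-1)\beta}^{y_0+m\beta} h(y)\,dy$; a symmetric estimate holds for $m\leq -2$ on $(-\infty,0]$ where $h$ is increasing. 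Telescoping and invoking \eqref{gk} yields
\[
\sum_{m\in\Z}h(y_0+m\beta)\ \leq\ 2\,+\,\beta^{-1}\!\int_{\R}h(y)\,dy\ =\ 2\,+\,a^\alpha\,c_\infty(s),
\]
which gives the first claimed bound after multiplying by $a^{w_\alpha(\lambda+\rho)}$.

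For \eqref{4.2new} it suffices to show that $c_\infty(s)=\sqrt{\pi}\,\Gamma(s/2)/\Gamma((s+1)/2)$ is uniformly bounded on $[\epsilon,\infty)$; this follows from its continuity there and its decay to $0$ as $s\to\infty$ via Stirling. Setting $C_\epsilon:=\sup_{s\geq\epsilon}c_\infty(s)<\infty$ and $M_\epsilon:=\max(2,C_\epsilon)+1>2$ gives $2+a^\alpha c_\infty(s)\leq M_\epsilon(1+a^\alpha)$ for all $a\in A^+$ whenever $s\geq\epsilon$. The only substantive care required is in the reduction step, namely verifying that $U=U_\alpha\cdot N_\alpha$ and the ensuing conjugation identities remain valid in the pro-unipotent completion of $U$ rather than only in $G^0_\R$; once this is granted, the rest is elementary one-variable analysis.
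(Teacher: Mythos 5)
Your proposal is correct and follows essentially the same route as the paper: factor out the $u_\alpha$-component of $u$ and absorb the rest into the unipotent radical normalized by the $\alpha$-Levi, reduce via \eqref{asl2u} and \eqref{halphaialphajchar} to the one-variable sum $\sum_m (1+a^{-2\alpha}(x+m)^2)^{-(s+1)/2}$, and bound it by $2$ plus the integral giving $a^\alpha c_\infty(s)$. Your treatment of the sum-to-integral comparison and of the uniform bound on $c_\infty$ over $[\epsilon,\infty)$ is just a more explicit version of what the paper does (the paper uses monotonicity of $c_\infty$, visible from \eqref{gk}, to take $M=\max(2,c_\infty(\epsilon))$).
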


\begin{proof}
Factor $u=u_\alpha(y)u^{(\alpha)}$, where $y\in\R$ and $u^{(\alpha)}$ lies in the unipotent radical $U^{(\alpha)}$ of the parabolic subgroup whose Levi component is generated by $A$ and
the one parameter subgroups $u_{\pm\alpha}(\cdot)$. Then since $w_\alpha u_\alpha(x+m+y)$ lies inside this Levi, it normalizes $U^{(\alpha)}$. Hence we may write
\begin{align*}
a(w_\alpha u_\alpha(x+m)g) \ = \ &~a(w_\alpha u_\alpha(x+m+y)u^{(\alpha)}a k)\\
\ = \ &~ a(w_\alpha u_\alpha(x+m+y)a)\,,
\end{align*}
since $U^{(\alpha)}\subset U$.
As $x$ is arbitrary and plays no role in the asserted upper bounds, we may replace $x$ by $x-y$ so that we are instead considering the sum
\begin{equation} \label{sum}
\sum_{m\in\mathbb{Z}} a(w_\alpha u_\alpha(x+m)a)^{\lambda+\rho} \ = \ a^{w_\alpha(\lambda+\rho)}\sum_{m\in\mathbb{Z}}a(w_\alpha u_\alpha(a^{-\alpha}(x+m)))^{\lambda+\rho}.
\end{equation}
Recalling (\ref{asl2u}),
the sum (\ref{sum}) becomes
\[
a^{w_\alpha(\lambda+\rho)}\sum_{m\in\mathbb{Z}}(1+a^{-2\alpha}(x+m)^2)^{-\frac{1}{2}\langle\lambda+\rho, \alpha^\vee\rangle}.
\]
Since $\langle \lambda+\rho,\alpha^\vee\rangle =\langle\lambda,\alpha^\vee\rangle+1$, comparing the sum to its corresponding integral gives  the estimate
\[
\aligned
\sum_{m\in\mathbb{Z}}(1+a^{-2\alpha}(x+m)^2)^{-\frac{s+1}{2}} \ & \leq \  2+\int_\mathbb{R}(1+a^{-2\alpha}x^2)^{-\frac{s+1}{2}}dx \\ & = \ 2+a^\alpha\, c_\infty(s)\,,
\endaligned
\]
for $s=\langle \lambda,\alpha^\vee\rangle>0$.  The second statement (\ref{4.2new}) for $M=  \max(2,c_\infty(\epsilon))$ now follows  from the bound  of $c_\infty(\langle\lambda,\alpha^\vee\rangle)\le c_\infty(\epsilon)$.
\end{proof}
%
%

We wish to generalize (\ref{4.2new}) to arbitrary $w\in W$ by induction on $\ell(w)$, which is the most complicated part of the argument.
For technical reasons we need to introduce the following property.   Recall  the notation $\Phi_w=\Phi_+\cap w^{-1}\Phi_{-}$ from (\ref{Phiwdef})-(\ref{Phiwparam}).

\begin{property}\label{conj}
Every nontrivial $w \in W$ can be written as
$w=vw_{\beta}$, where $\beta$ is a positive simple root, $\ell(v)<\ell(w)$, and $\alpha -\beta$ is never a real root for any $\alpha \in \Phi_v$.
\end{property}

It is not hard to see (e.g., see \cite[\S 3.2]{T}) that   Property~\ref{conj} is unchanged if the word ``real'' is omitted.  This is because if $\alpha-\beta=\alpha+w_{\beta}\beta$ is a (necessarily, positive) root, then $v(\alpha-\beta)=v\alpha+w\beta$ is a negative root (cf.~(\ref{Phiwparam})), and all elements of $\Phi_v$ are real roots (again appealing to (\ref{Phiwparam})).

Although Property \ref{conj} does not hold for  arbitrary Kac--Moody root systems (see Example \ref{counterex} for counterexamples), we shall nevertheless demonstrate that it holds in infinitely many examples.

\begin{proposition} \label{prop-generic}
Property \ref{conj} holds if  the Cartan matrix  ${\sf A}=(a_{ij})$ of $\mathfrak g$ is symmetric and  $|a_{ij}|\ge 2$ for all $i,j \in I$.
\end{proposition}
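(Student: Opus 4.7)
By assumption $A$ is symmetric with $|a_{ij}|\ge 2$ for all $i\ne j$, so $a_{ij}a_{ji}\ge 4$ and each product $w_iw_j$ ($i\ne j$) has infinite order. Hence $W=\ast_{i\in I}\langle w_i\rangle$ is a free Coxeter group: every nontrivial $w\in W$ has a \emph{unique} reduced expression $w=w_{i_1}\cdots w_{i_\ell}$ (with $i_s\ne i_{s+1}$), its right descent set is the singleton $\{\alpha_{i_\ell}\}$, and the only admissible decomposition $w=vw_\beta$ with $\ell(v)<\ell(w)$ is $\beta=\alpha_{i_\ell}$, $v=w_{i_1}\cdots w_{i_{\ell-1}}$. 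I would normalize the $W$-invariant bilinear form $(\cdot,\cdot)$ on $\mathfrak{h}^*$ by $(\alpha_i,\alpha_j)=a_{ij}$, so that $(\gamma,\gamma)=2$ for every real root $\gamma$ and $(\gamma,\gamma)\le 0$ for every imaginary root. For $\alpha\in\Phi_v$, the identity $(\alpha-\beta,\alpha-\beta)=4-2(\alpha,\beta)$ shows that if $(\alpha,\beta)\le 0$ then the self-pairing is $\ge 4>2$ and strictly positive, ruling out real and imaginary roots respectively. So it suffices to prove
\[
(\star)\qquad (\alpha,\beta)\le 0\quad\text{for every }\alpha\in\Phi_v.
\]

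I would establish $(\star)$ from the stronger claim: for every $u\in W$ and every simple root $\beta'$ with $u\beta'\in\Phi_+$, one has $(\alpha,\beta')\le 0$ for all $\alpha\in\Phi_u$. Proceed by induction on $\ell(u)$; the base case $\ell(u)=0$ is vacuous. For the inductive step, write $u=u'w_{i_n}$ reduced and use $\Phi_u=\{\alpha_{i_n}\}\cup w_{i_n}\Phi_{u'}$. The case $\alpha=\alpha_{i_n}$ is immediate, since $(\alpha,\beta')=a_{i_n,j}\le -2$ where $\alpha_j=\beta'$ and $j\ne i_n$ (otherwise $u\beta'<0$). For $\alpha=w_{i_n}\alpha''$ with $\alpha''\in\Phi_{u'}$, $W$-invariance gives
\[
(\alpha,\beta')\;=\;(\alpha'',\beta')+|a_{i_n,j}|\,(\alpha'',\alpha_{i_n}).
\]
Because $u'w_{i_n}$ is reduced, $u'\alpha_{i_n}>0$, so the second summand is $\le 0$ by induction on $u'$. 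When also $u'\beta'>0$, the first summand is $\le 0$ by induction too, completing this branch.

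The main obstacle is the remaining subcase $u'\beta'<0$. In the free-Coxeter setting the only simple root in $\Phi_{u'}$ is $\alpha_{i_{n-1}}$, so this arises exactly when $\beta'=\alpha_{i_{n-1}}$, meaning the reduced word ends with an alternating pattern $\cdots i_{n-1}i_n i_{n-1}$. I would resolve this by continuing the unfolding $\Phi_{u'}=\{\alpha_{i_{n-1}}\}\cup w_{i_{n-1}}\Phi_{u''}$ and iterating as long as the tail of the reduced word keeps alternating between $i_{n-1}$ and $i_n$. A direct calculation shows that at each step the expression for $(\alpha,\beta')$ takes the form
\[
(\alpha,\beta')\;=\;P_k\,(\alpha^{(k)},\alpha_{i_{n-1}})+Q_k\,(\alpha^{(k)},\alpha_{i_n}),
\]
where $P_k,Q_k$ are polynomials in $c=|a_{i_{n-1},i_n}|\ge 2$ governed by the Chebyshev-type recurrence $u_0=1$, $u_1=c$, $u_{m+1}=cu_m-u_{m-1}$ (all positive for $c\ge 2$). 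The recursion terminates in one of two ways: (a) the alternation breaks, so the last letter of $u^{(k+1)}$ is neither $w_{i_{n-1}}$ nor $w_{i_n}$, and induction bounds both inner products at the innermost level by $0$, yielding $(\alpha,\beta')\le 0$; or (b) the alternation persists to the beginning of the reduced word, in which case $\alpha^{(k)}$ is forced to be a simple root in the rank-two subsystem $\langle\alpha_{i_{n-1}},\alpha_{i_n}\rangle$, and direct substitution together with the elementary estimate $cu_{m+1}\ge 2u_m$ (valid for $c\ge 2$, since $u_{m+1}/u_m\ge c-1\ge 1$) gives $(\alpha,\beta')\le 0$. The main technical difficulty is bookkeeping the Chebyshev-type coefficients through the alternating tail so that the cancellations always land on the non-positive side uniformly for all $c\ge 2$.
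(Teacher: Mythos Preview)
Your overall strategy coincides with the paper's: both reduce Property~\ref{conj} to showing $(\alpha,\alpha_{i_\ell})\le 0$ for every $\alpha\in\Phi_v$ (with the normalization $(\alpha_i,\alpha_j)=a_{ij}$), and then conclude via $(\alpha-\alpha_{i_\ell}\mid\alpha-\alpha_{i_\ell})\ge 4$.

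Where the two diverge is in how that pairing inequality is obtained. The paper first isolates a clean combinatorial lemma: if $w_{i_1}\cdots w_{i_k}$ is reduced and one expands $w_{i_k}\cdots w_{i_2}\alpha_{i_1}=\sum_j m_j\alpha_j$, then $m_{i_k}>m_j$ for all $j\ne i_k$. This has a one-step induction using only $|a_{ij}|\ge 2$. Given this, the pairing is estimated in a single line,
\[
\langle\alpha,\alpha_{i_\ell}^\vee\rangle \;=\; 2m_{i_\ell}+\sum_{j\ne i_\ell}m_j a_{i_\ell j}\;\le\;2m_{i_\ell}+m_{i_{\ell-1}}a_{i_\ell i_{\ell-1}}\;<\;0,
\]
and one even gets the strict inequality. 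Your direct induction on $\ell(u)$ is correct, but it runs into the subcase $u'\beta'<0$, which pushes you into a rank-two Chebyshev unwinding along the alternating tail. That unwinding works (the estimate $cu_{m+1}\ge 2u_m$ is exactly what is needed when $\alpha^{(k)}$ bottoms out at a simple root, and the non-alternating termination is handled by induction as you say), but it is considerably more bookkeeping for the same conclusion. The paper's coefficient-dominance lemma absorbs all of that rank-two analysis into a single uniform inductive step and delivers the strict inequality for free.
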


\begin{proof}
First we claim that if
 $w_{i_1}\cdots w_{i_k}$ is a reduced word and the  (positive, by (\ref{Phiwparam}))  root $w_{i_k} \cdots w_{i_2} \alpha_{i_1}$ is expanded as $\sum_{j \in I} m_j \alpha_j$, then  $m_{i_k} > m_j$ for $j \neq i_k$.  Indeed, this can be seen as follows by induction, the case $k=2$ being a consequence of our assumption.
  Assume that the claim is true for $k$. Then we write $w_{i_{k+1}}w_{i_k} \cdots w_{i_2} \alpha_{i_1} = \sum_{j \in I} m_j' \alpha_j$ and obtain
\begin{align*}  \sum_{j \in I} m_j' \alpha_j &= w_{i_{k+1}}\sum_{j \in I} m_j \alpha_j \\ &= m_{i_k}(\alpha_{i_k}-a_{i_{k+1}i_k} \alpha_{i_{k+1}}) \,-\, m_{i_{k+1}} \alpha_{i_{k+1}} \\ &\qquad + \sum_{j \neq i_k, i_{k+1}} m_j (\alpha_j - a_{i_{k+1}j} \alpha_{i_{k+1}})\,.
\end{align*}
We have  $m'_j=m_j <m_{i_k}=m'_{i_k}$ for $j \neq i_k, i_{k+1}$ and
\[ m'_{i_{k+1}} \ = \ -m_{i_k}a_{i_{k+1}i_k}\,-\,m_{i_{k+1}} \,-\, \sum_{j \neq i_k, i_{k+1}} m_j a_{i_{k+1}j} \ > \ m_{i_k} \ = \ m'_{i_k}\,,\]
since $-(1+a_{i_{k+1}i_k})m_{i_k}-m_{i_{k+1}}\ge m_{i_k}-m_{i_{k+1}}>0$. This proves the claim.

We now claim that for any reduced word $w=w_{i_1}\cdots w_{i_\ell}$, one has $\langle \alpha, \alpha_{i_\ell}^\vee \rangle <0$ for each $\alpha \in \Phi_v$, where $v=w_{i_1}\cdots w_{i_{\ell-1}}$.
By (\ref{Phiwparam}) the elements of $\Phi_v$  can be written as $w_{i_{\ell-1}} \cdots w_{i_{k+1}}\alpha_{i_k}$, $k=1,\ldots, \ell-1$. By the above claim, if  $w_{i_{\ell-1}} \cdots w_{i_{k+1}}\alpha_{i_k}= \sum_{j \in I} m_j \alpha_j$, then $m_{i_{\ell-1}}>m_j$ for $j \neq i_{\ell-1}$. In particular, $m_{i_{\ell-1}}>m_{i_\ell}$ since  $i_\ell \neq i_{\ell-1}$ for a reduced word.  Therefore
\begin{equation} \label{ge2}
\aligned
\langle w_{i_{\ell-1}} \cdots w_{i_{k+1}}\alpha_{i_k}, \alpha_{i_{\ell}}^\vee \rangle & \  = \  \langle \sum_{j \in I} m_j \alpha_j , \alpha_{i_{\ell}}^\vee \rangle  \ = \
 \sum_{j \neq i_{\ell}} m_j a_{i_\ell j} + 2 m_{i_{\ell}} \\
& \ \le \  m_{i_{\ell-1}} a_{i_{\ell}i_{\ell-1}} \,+ \,2 m_{i_{\ell}} \  <  \ 0
\endaligned \end{equation}
after dropping all but the $j=i_{\ell-1}$ term from the sum.

Let $(\cdot | \cdot )$  denote the  symmetric bilinear form on $\mathfrak h^*$ defined in \cite[\S2.3]{K}, which in general involves a choice of scaling by a positive constant; since ${\sf A}$ is symmetric it can be written as  $(\alpha_i | \alpha_j)=a_{ij}= \langle \alpha_j , \alpha_i^\vee \rangle$.  The bilinear form $(\cdot|\cdot)$ is Weyl invariant \cite[Prop.~3.9]{K}, and so one has $(\beta | \beta)=(\alpha_i|\alpha_i)$ for any root of the form $\beta=w\alpha_i$, for some $w\in W$ and $i\in I$.  On the other hand, \cite[Prop.~5.2(c)]{K} shows that $(\beta | \beta)\le 0$ for any imaginary root (that is, a root which is not a Weyl translate of a simple root).  Thus in any event one has
\begin{equation}\label{betabetale2}
  (\beta|\beta) \ \le \  \max_{i\in I}\,(\alpha_i|\alpha_i)
\end{equation}
for any root $\beta$.\footnote{For later reference we remark that even though $A$ here is assumed to be symmetric, the conclusion (\ref{betabetale2}) nevertheless holds even if $A$ is merely symmetrizable (with $(\cdot|\cdot)$ as defined  in \cite[\S2.3]{K}, or any positive scalar multiple of it).}

 Let $\alpha = w_{i_{\ell-1}} \cdots w_{i_{k+1}}\alpha_{i_k}$ be an element of $\Phi_v$, where $1 \le k  \le \ell-1$.  Since
\begin{equation}\label{phivandw}
  \Phi_w \ = \ w_{i_\ell}\Phi_v\cup\{\alpha_{i_\ell}\}
\end{equation}
by (\ref{Phiwparam}),
$\alpha$ cannot equal $\alpha_{i_\ell}$, for if it did then $-\alpha_{i_\ell}=w_{i_\ell}\alpha\in\Phi_w$ would be  a positive root.
We compute
\[
\aligned
(\alpha - \alpha_{i_{\ell}}| \alpha- \alpha_{i_{\ell}} ) & \ = \  (\alpha_{i_k}| \alpha_{i_k})  \, + \,  (\alpha_{i_\ell}| \alpha_{i_\ell}) \, -\,  2 (\alpha| \alpha_{i_{\ell}}) \\ & \ = \ 4 \, -\, 2 \langle \alpha, \alpha_{i_{\ell}}^\vee \rangle \  > \  4
\endaligned
\]
using the second claim above.
Thus $\alpha - \alpha_{i_\ell}$ is not a root for $\alpha \in \Phi_v$.
\end{proof}

Unfortunately Proposition~\ref{prop-generic}'s restrictive assumption on $|a_{ij}|$ rules out many cases of interest.  In fact, only one hyperbolic Kac--Moody algebra of rank $\ge 3$ satisfies its hypotheses, namely the one having ${\sf A} =\left(\begin{smallmatrix} \,2 & -2 & -2 \\
-2 & \,2 & -2 \\
-2 & -2 & \,2 \\
\end{smallmatrix}\right)$.  In contrast, however, the assumptions of Proposition~\ref{prop-generic} hold for  many  rank $2$ hyperbolic root systems, even ones with non-symmetric Cartan matrices:

\begin{proposition} \label{prop-rank2}
Let ${\sf A}=\begin{pmatrix} 2 & -b \\ -a & 2\end{pmatrix}$, $a,b \ge 2$ and $ab \ge 5$. Then Property \ref{conj} holds for the Kac--Moody root system associated to ${\sf A}$.
\end{proposition}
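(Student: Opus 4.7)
The plan is to follow the template of the proof of Proposition~\ref{prop-generic}, adapting it to the non-symmetric rank 2 hyperbolic setting. Since $ab \ge 5$, $W$ is the infinite dihedral group, so every reduced expression $w = w_{i_1} \cdots w_{i_\ell}$ alternates between $w_1$ and $w_2$. I would take $\beta := \alpha_{i_\ell}$ and $v := w_{i_1} \cdots w_{i_{\ell-1}}$ so that $\ell(v) = \ell(w) - 1$, and by the remark following Property~\ref{conj} it suffices to show that $\alpha - \beta$ is not a root at all for every $\alpha \in \Phi_v$.

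First I would establish by induction on $k$ the coefficient lemma: for any reduced word $w_{j_1} \cdots w_{j_k}$, writing $w_{j_k} \cdots w_{j_2} \alpha_{j_1} = m_1 \alpha_1 + m_2 \alpha_2$, one has $m_{j_k} > m_{j_k'}$, where $j_k'$ denotes the element of $\{1,2\}\setminus\{j_k\}$. The inductive step is a direct computation: for instance, if $j_k = 1$ and $j_{k+1} = 2$, then applying $w_2$ produces new coefficients $(m_1,\, am_1 - m_2)$, and $am_1 - m_2 \ge (a-1)m_1 > m_2$ by the inductive hypothesis $m_1 > m_2$ together with $a \ge 2$; the opposite case is symmetric. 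Applying this lemma to the reduced subword $w_{i_k} w_{i_{k+1}} \cdots w_{i_{\ell-1}}$, every element $\alpha = w_{i_{\ell-1}} \cdots w_{i_{k+1}}\alpha_{i_k} \in \Phi_v$ satisfies $m_{i_{\ell-1}} \ge m_{i_\ell} + 1$, so
\[
\langle \alpha, \alpha_{i_\ell}^\vee \rangle \ = \ 2 m_{i_\ell} + m_{i_{\ell-1}}\, a_{i_\ell, i_{\ell-1}} \ \le \ m_{i_\ell}\bigl(2 - |a_{i_\ell, i_{\ell-1}}|\bigr) - |a_{i_\ell, i_{\ell-1}}| \ \le \ -|a_{i_\ell, i_{\ell-1}}|,
\]
using $|a_{i_\ell, i_{\ell-1}}| \ge 2$ and $m_{i_\ell} \ge 0$.

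Next I would invoke the Weyl-invariant symmetric form $(\cdot|\cdot)$ of \cite[\S2.3]{K}, normalized so that $d_1 = a$ and $d_2 = b$, giving $(\alpha_i | \alpha_j) = d_i a_{ij}$; in particular $(\alpha_1|\alpha_1) = 2a$, $(\alpha_2|\alpha_2) = 2b$, and $(\alpha|\alpha_i) = d_i \langle \alpha, \alpha_i^\vee \rangle$ for every $\alpha \in \mathfrak{h}^*$. Both $\alpha$ and $\beta$ are real roots, so $(\alpha|\alpha), (\beta|\beta) \in \{2a, 2b\}$, each at least $2\min(a,b)$. Observing $d_{i_\ell}\,|a_{i_\ell, i_{\ell-1}}| = ab$ in either case, I obtain
\[
(\alpha - \beta | \alpha - \beta) \ = \ (\alpha|\alpha) + (\beta|\beta) - 2 d_{i_\ell}\langle \alpha, \alpha_{i_\ell}^\vee \rangle \ \ge \ 4\min(a,b) + 2ab \ > \ 2\max(a,b),
\]
the last inequality being elementary for $a,b \ge 2$ (since $ab + 2\min(a,b) > \max(a,b)$). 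The estimate (\ref{betabetale2}) and its accompanying footnote apply to the symmetrizable matrix $A$ and give $(\gamma|\gamma) \le 2\max(a,b)$ for every real root $\gamma$; imaginary roots have $(\gamma|\gamma) \le 0$. Thus $\alpha - \beta$ has too large a norm to be real and is too positive to be imaginary, so it is not a root.

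The main obstacle is the coefficient induction: unlike the symmetric case of Proposition~\ref{prop-generic}, the integers $a$ and $b$ play asymmetric roles, forcing a case split at each inductive step. Once the bound $\langle \alpha, \alpha_{i_\ell}^\vee \rangle \le -|a_{i_\ell, i_{\ell-1}}|$ is in hand, the bilinear form calculation proceeds uniformly and with plenty of room to spare.
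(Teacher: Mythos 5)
Your proof is correct, and it reaches the conclusion by a genuinely different route than the paper. The paper's proof first reduces by symmetry to $\beta=\alpha_2$, then explicitly parametrizes $\Phi_v$ as the two families $(w_1w_2)^n\alpha_1$ and $(w_1w_2)^nw_1\alpha_2$, computes their coordinates via the Chebyshev-type sequence $h_n=(\mu^n-\mu^{-n})/(\mu-\mu^{-1})$ with $\mu=\tfrac{\sqrt{ab}+\sqrt{ab-4}}{2}$, proves the monotonicity bounds $\sqrt{ab}\,h_{2n+2}-2h_{2n+1}\ge ab-2$ and $\sqrt{ab}\,h_{2n+1}-2h_{2n}\ge\sqrt{ab}$, and then evaluates the two norms directly. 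You instead transplant the coefficient-domination induction from the proof of Proposition~\ref{prop-generic} (which the paper only carries out for symmetric matrices) to the asymmetric $2\times 2$ case, obtaining the quantitative bound $\langle\alpha,\alpha_{i_\ell}^\vee\rangle\le -|a_{i_\ell i_{\ell-1}}|$ --- and that stronger bound is genuinely needed here, since the crude bound $\langle\alpha,\alpha_{i_\ell}^\vee\rangle<0$ would only give $(\alpha-\beta|\alpha-\beta)\ge 6\min(a,b)$, which can fall below $2\max(a,b)$. Both arguments then close with the same norm obstruction (\ref{betabetale2}) for the symmetrized form $\operatorname{diag}(a,b)A$. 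Your version is more uniform (no case split between the two root families, no reduction to $\beta=\alpha_2$) and makes the parallel with Proposition~\ref{prop-generic} transparent; the paper's version gives explicit closed-form norms. One slip to fix: in the inductive step the target inequality is $am_1-m_2>m_1$ (the new coefficient of $\alpha_{j_{k+1}}$ must dominate the \emph{other} coefficient, which is $m_1$), not $>m_2$ as your chain ends; the correct chain is $am_1-m_2>am_1-m_1=(a-1)m_1\ge m_1$, using exactly your two ingredients $m_1>m_2$ and $a\ge 2$, so nothing is actually missing.
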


\begin{proof}
The Weyl group of the root system is the infinite dihedral group generated by the simple reflections $w_1$ and $w_2$.
In terms of the basis $\{\alpha_1,\alpha_2\}$ of $\mathfrak{h}^*$, $w_1$ acts by the matrix $\left(\begin{smallmatrix} -1 & b \\ 0 & 1\end{smallmatrix}\right)$ and $w_2$ acts by the matrix $\left(\begin{smallmatrix} 1& 0 \\ a & -1\end{smallmatrix}\right)$.
Since $w_1$ and $w_2$ play symmetric roles, it suffices to establish Property \ref{conj} in the case that $\beta=\alpha_2$.  Thus $w=vw_\beta$ is represented by a reduced word ending in $w_2$, and hence $v$ is represented by a reduced word ending in $w_1$.  In particular, $v$ must have the form $(w_2 w_1)^m$ or $w_1(w_2 w_1)^m$ for some $m\ge 0$.  The   roots $\alpha\in \Phi_v$   either have the form
$$
(w_1w_2)^n \alpha_1 \ \ \text{or} \ \ (w_1w_2)^n w_1 \alpha_2\,,
$$
for some $n\ge 0$ by (\ref{Phiwparam}).  To establish Property \ref{conj}, we show that none of
\begin{equation}\label{w1w2nsroot}
 (w_1w_2)^n \alpha_1 -\alpha_2 \ \ \text{ or } \ \  (w_1w_2)^n w_1 \alpha_2 -\alpha_2\,,  \ \  \ n\ge 0,
\end{equation}
 are roots.  Following the strategy in the proof of Proposition~\ref{prop-generic}, we will use the symmetric bilinear form $(\cdot|\cdot)$ on $\mathfrak h^*$ associated to the symmetrized matrix  $\begin{pmatrix} a & 0 \\ 0 & b
\end{pmatrix} {\sf A} = \begin{pmatrix} 2a & -ab \\ -ab & 2b\end{pmatrix}$, and appeal to condition (\ref{betabetale2}), which on our context states that
 \begin{equation}\label{betabetaleab}
   (\beta|\beta) \le \max\{2a,2b\}
 \end{equation}
for any root $\beta$.

Let $\mu= \frac{\sqrt{ab} +\sqrt{ab-4}}2>1$ and
$\displaystyle{h_n= \frac 1{\mu-\mu^{-1}}(\mu^n-\mu^{-n})}$, which is a monotonically increasing function of $n$ satisfying
$$h_{n+2}=h_n+\mu^{n+1}+\mu^{-n-1}.$$
Since $\sqrt{ab}>2$ we have for any $n\ge 1$ that
\begin{align*} \sqrt{ab}  \, h_{n+2} - 2 h_{n+1} & = \sqrt{ab} \, (\mu^{n+1} + \mu^{-n-1}) -2 (\mu^{n} + \mu^{-n}) + \sqrt{ab}  \, h_{n} - 2 h_{n-1} \\ & > \sqrt{ab}  \, h_{n} - 2 h_{n-1}.
 \end{align*}
Repeating,  we obtain
\begin{equation}\label{h2nlowerbounds}
\aligned
  \sqrt{ab}  \, h_{2n+2} - 2 h_{2n+1} & \  \ge \  \sqrt{ab}  \, h_{2} - 2 h_{1}   \  = \    a b -2  \ \ \  \text{and}\\
   \sqrt{ab}  \, h_{2n+1} - 2 h_{2n} &  \ \ge \  \sqrt{ab}  \, h_{1} - 2 h_{0}  \ = \  \sqrt{a b} > 2  \\
\endaligned
\end{equation}
for any $n\ge 0$.

It follows from the formulas for the Weyl action that
\begin{align*}
(w_1w_2)^n\alpha_1& \ = \ h_{2n+1}\alpha_1 \,+\,\tfrac{\sqrt a}{\sqrt b} \ h_{2n} \alpha_2,\\
(w_1w_2)^n w_1\alpha_2& \ = \  \tfrac{\sqrt b}{\sqrt a} \,  h_{2n+2}\alpha_1\,+\,h_{2n+1} \alpha_2
\end{align*}
(the case $n=0$ is obvious, and both sides satisfy the same recurrence relations in $n$).
Since $( \cdot | \cdot)$ is $W$-invariant and $\sqrt{ab}>2$, we compute
\[
\begin{split}
 \left((w_1w_2)^n \alpha_1   -   \alpha_2 \right. & \left. |\, (w_1w_2)^n \alpha_1 -\alpha_2 \right)  \\
&  = \  (\alpha_1 | \alpha_1) + (\alpha_2 | \alpha_2)  -2 ((w_1w_2)^n \alpha_1 |\alpha_2) \\
& = \ 2a+2b-2 \left(h_{2n+1}\alpha_1 +\tfrac{\sqrt a}{\sqrt b} \ h_{2n} \alpha_2 \Big | \alpha_2 \right) \\
& = \  2a+2b +2ab \, h_{2n+1}-4 \sqrt{ab} \, h_{2n} \\
 & = \  2a+2b +2 \sqrt{ab} ( \sqrt{ab} \, h_{2n+1}-2 \, h_{2n}) \\
 &  > \  2a+2b \  > \  \max \{ 2a, 2b \}\,,\\
\end{split}
\]
so that  $(w_1w_2)^n \alpha_1 -\alpha_2$ cannot be a root according to (\ref{betabetaleab}).

Similarly,
\[
\begin{split}  \left((w_1w_2)^n w_1 \alpha_2 -\alpha_2\right. & \left. |\, (w_1w_2)^n w_1 \alpha_2 -\alpha_2 \right ) \\
&  = \  2(\alpha_2 | \alpha_2)   -2 ((w_1w_2)^n w_1 \alpha_2 |\alpha_2) \\
&= \ 4b-2 \left(\tfrac{\sqrt b}{\sqrt a} \  h_{2n+2}\alpha_1+h_{2n+1} \alpha_2 \Big | \alpha_2 \right) \\
&= \  4b +2 \tfrac{\sqrt b}{\sqrt a} ab\  h_{2n+2} -4b\, h_{2n+1}
 \\ &= \  4b + 2b \left ( \sqrt{ab}\, h_{2n+2} -2 h_{2n+1} \right ) \\
& \ge \  4b+2b(ab-2)  \ = \  2ab^2 \  >  \ \max \{ 2a, 2b \} \,,
 \end{split}
 \]
where we have used  the assumption $a,b \ge 2$.
Thus $(w_1w_2)^n w_1 \alpha_2 -\alpha_2$ is not a root either.
\end{proof}



\begin{ex} \label{counterex}
(1) Let $\Phi$ be the root system for the (asymmetric) Cartan matrix  ${\sf A} =\begin{pmatrix} 2 & -1 \\ -a & 2\end{pmatrix}$, $a \ge 5$ (so that $\det {\sf A} <0$).  Consider  $w = w_{\alpha_2}w_{\alpha_1}w_{\alpha_2}$, which is the unique reduced for $w$ of minimal length.  Letting $v= w_{\alpha_2}w_{\alpha_1}$ and $\beta=\alpha_2$, we see   $w_{\alpha_1} \alpha_2 \in \Phi_v$ and    $w_{\alpha_1} \alpha_2 -\alpha_2 = \alpha_1$, hence Property \ref{conj} does not hold for  this Kac-Moody root system.

(2) Let $\Phi$ be the  $A_2$ root system and consider  $w = w_{\alpha_2}w_{\alpha_1}w_{\alpha_2}=w_{\alpha_1}w_{\alpha_2}w_{\alpha_1}$.  Then Property \ref{conj} does not hold for $w$, regardless of how it is written as a reduced word.

\end{ex}

\medskip

In the rest of this section  we will establish our main result on convergence (which assumes  Property \ref{conj}).

\begin{definition} \label{def-adm}
Assume that Property \ref{conj} holds. Then by recursion any nontrivial $w\in W$ can be expressed as a reduced word $w=w_{\beta_1}\cdots w_{\beta_\ell}$  such that \[ \alpha - \beta_{i+1}  \text{ is never a root for any }
\alpha\in\Phi_{v_i},  \quad i =1,2, \dots , \ell-1 ,\]
where $v_i=w_{\beta_1}\cdots w_{\beta_i}$. Such a reduced word will be called an {\em admissible word} for $w$.
\end{definition}

\begin{lemma} \label{lem-aqa}
Assume Property \ref{conj} and that $\lambda \in \mathcal{C}^*$. Let $w=w_{\beta_1}\cdots w_{\beta_\ell}$ be an admissible word for $w \in W$ and again set $v_i=w_{\beta_1}\cdots w_{\beta_i}$. Then
\begin{equation}\label{ineq}
\langle v_i^{-1}(\lambda+\rho)+\sum_{\alpha\in S_i}\alpha, \beta_{i+1}^\vee\rangle \ > \ 1, \quad i = 1, 2, \dots , \ell-1,
\end{equation}
for any  subset $S_i$ of $\Phi_{v_i}$.
\end{lemma}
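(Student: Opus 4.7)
The strategy is to transport $v_i^{-1}$ across the pairing and then exploit admissibility together with the Weyl-sum identity \eqref{rhominuswrho}. After the rewrite, the left-hand side of \eqref{ineq} becomes
\[
\langle \lambda + \rho,\, v_i\beta_{i+1}^\vee\rangle \,+\, \sum_{\alpha\in S_i}\langle\alpha,\beta_{i+1}^\vee\rangle.
\]
Since $w_{\beta_1}\cdots w_{\beta_\ell}$ is a reduced word, $\ell(v_iw_{\beta_{i+1}}) = \ell(v_i)+1$, so $\gamma := v_i\beta_{i+1}$ is a positive real root. Its coroot $\gamma^\vee$ is then a nonzero non-negative integer combination of simple coroots, giving $\langle \lambda,\gamma^\vee\rangle > 0$ from $\lambda\in\mathcal{C}^*$ and $\langle\rho,\gamma^\vee\rangle \ge 1$.

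Second, I would use admissibility to show that $\langle\alpha,\beta_{i+1}^\vee\rangle \le 0$ for each $\alpha\in\Phi_{v_i}$. Note $\beta_{i+1}\notin\Phi_{v_i}$ (otherwise $v_i\beta_{i+1}$ would be negative), and Definition~\ref{def-adm} combined with the remark following Property~\ref{conj} guarantees $\alpha - \beta_{i+1}$ is not a root. Since $\alpha$ and $\beta_{i+1}$ are both real roots, the standard $\beta_{i+1}$-string argument then forces $\langle\alpha,\beta_{i+1}^\vee\rangle\le 0$: otherwise the connected $\beta_{i+1}$-string through $\alpha$ would contain $\alpha-\beta_{i+1}$ as a root.

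Third, because each summand is non-positive, for any $S_i\subseteq\Phi_{v_i}$ one has
\[
\sum_{\alpha\in S_i}\langle\alpha,\beta_{i+1}^\vee\rangle \,\ge\, \sum_{\alpha\in\Phi_{v_i}}\langle\alpha,\beta_{i+1}^\vee\rangle \,=\, \langle\rho - v_i^{-1}\rho,\,\beta_{i+1}^\vee\rangle \,=\, 1 - \langle\rho,\gamma^\vee\rangle,
\]
where I have applied \eqref{rhominuswrho} in the form $\sum_{\alpha\in\Phi_{v_i}}\alpha = \rho - v_i^{-1}\rho$, together with $\langle\rho,\beta_{i+1}^\vee\rangle = 1$. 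Combining this with the first display cancels the $\rho$ contributions and leaves $\langle\lambda,\gamma^\vee\rangle + 1 > 1$, as desired.

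The one nontrivial input -- and thus the main obstacle -- is the passage from admissibility to the bound $\langle\alpha,\beta_{i+1}^\vee\rangle\le 0$, which rests on the $\beta$-string lemma for real roots in a Kac--Moody system. All other steps are direct manipulations of standard identities.
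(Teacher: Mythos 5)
Your proof is correct and follows essentially the same route as the paper's: both arguments hinge on the root-string lemma \cite[Prop.~5.1(c)]{K} to deduce $\langle\alpha,\beta_{i+1}^\vee\rangle\le 0$ for $\alpha\in\Phi_{v_i}$ from admissibility, the identity \eqref{rhominuswrho}, and the positivity of $\langle\lambda, v_i\beta_{i+1}^\vee\rangle$ coming from $\lambda\in\mathcal{C}^*$ and $v_i\beta_{i+1}\in\Phi_+$. The only (immaterial) difference is bookkeeping: you enlarge $S_i$ to all of $\Phi_{v_i}$, whereas the paper discards the complementary sum $-\sum_{\alpha\in\Phi_{v_i}\setminus S_i}\langle\alpha,\beta_{i+1}^\vee\rangle\ge 0$.
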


\begin{proof}
For $\alpha  \in \Phi_{v_i}$, consider the root string $\alpha + m \beta_{i+1}$, $m \in \mathbb Z$. Since $\alpha$ and $\beta_i$ are real roots and $\alpha - \beta_{i+1}$ is not a root by Property \ref{conj}, we have by \cite[Prop.~5.1(c)]{K} that
\[ \langle \alpha , \beta_{i+1}^\vee \rangle \   = \  - \max \, \{ m : \alpha + m \beta_{i+1} \text{ is a root}  \}  \ \le \  0\, . \]
Note that (\ref{ineq}) is equivalent to
\[
\langle v_i^{-1}\lambda -\sum_{\alpha\in \Phi_{v_i} \setminus S_i} \alpha, \beta_{i+1}^\vee\rangle \  > \ 0
\]
by (\ref{rhominuswrho}).  Since $\lambda \in \mathcal{C}^*$ and the fact that $v_i\beta_{i+1}^\vee$ is a positive root (cf.~(\ref{Phiwparam})), we have
\[
\langle v_i^{-1}\lambda -\sum_{\alpha\in \Phi_{v_i} \setminus S_i} \alpha, \beta_{i+1}^\vee\rangle   \ \ge \  \langle v_i^{-1}\lambda,\beta_{i+1}^\vee\rangle \ = \ \langle\lambda, v_i\beta_{i+1}^\vee\rangle \ > \ 0,
\]
and the claim follows.
\end{proof}

Let $U_w\subset U$ be the subgroup generated by the one-parameter subgroups for the roots
in $\Phi_w$.

\begin{lemma} \label{lem-aqq}
Assume Property \ref{conj}  and let $w=w_{\beta_1}\cdots w_{\beta_\ell}$ be an admissible word for an element $w \in W$ (see Definition \ref{def-adm}). Suppose that \[ u = u_{\nu_1}(x_1) \cdots u_{\nu_\ell}(x_\ell) \in U_w\ \text{ and } \ \gamma = u_{\nu_1}(m_1) \cdots u_{\nu_\ell}(m_\ell) \in \Gamma \cap U_w \] for $x_1,\ldots, x_\ell\in\mathbb{R}$ and $m_1, \ldots, m_\ell \in \mathbb Z$, where $\nu_i = w_{\beta_\ell} \cdots w_{\beta_{i+1}}\beta_i \in \Phi_w$, $i=1, \dots , \ell$ (cf.~(\ref{Phiwparam})).
Then
\[wu\gamma \  = \   w_{\beta_1} u_{\beta_1}(\epsilon_1(x_1+m_1))\cdots w_{\beta_\ell}u_{\beta_\ell}(\epsilon_{\ell}(x_\ell+m_\ell)) \]
for some  signs $\epsilon_1,\ldots,\epsilon_\ell\in\{-1,1\}$ depending only on $\beta_1,\ldots,\beta_\ell$ and the choice of Weyl group reprsentatives in $\textrm{Aut}(V_{F})$.
\end{lemma}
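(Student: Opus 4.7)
The plan is to argue by induction on $\ell = \ell(w)$, working one Weyl factor at a time and invoking admissibility at exactly the point where a commutator needs to vanish. The base case $\ell = 1$ is immediate: $w u \gamma = w_{\beta_1} u_{\beta_1}(x_1) u_{\beta_1}(m_1) = w_{\beta_1} u_{\beta_1}(x_1+m_1)$ by the one-parameter subgroup relation, so $\epsilon_1 = +1$.

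For the inductive step with $\ell \ge 2$, I set $v' := w_{\beta_2}\cdots w_{\beta_\ell}$; the sub-word $(\beta_2,\ldots,\beta_\ell)$ is admissible for $v'$ (its conditions are a subset of those for the full word) and its associated roots are precisely $\nu_2,\ldots,\nu_\ell$. Split $u = u_{\nu_1}(x_1)\,\tilde u$ and $\gamma = u_{\nu_1}(m_1)\,\tilde\gamma$ with $\tilde u, \tilde\gamma \in U_{v'}$ (resp.\ $\Gamma\cap U_{v'}$). The crux is the commutation $\tilde u\cdot u_{\nu_1}(m_1) = u_{\nu_1}(m_1)\cdot \tilde u$, which yields $u\gamma = u_{\nu_1}(x_1+m_1)\,\tilde u\,\tilde\gamma$. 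Using the telescoping identity $v'\nu_1 = \beta_1$ and the conjugation rule $v'\,u_{\nu_1}(s)\,(v')^{-1} = u_{\beta_1}(\epsilon_1 s)$ (with $\epsilon_1\in\{\pm 1\}$ fixed only by the $\beta_k$'s and the chosen Weyl representatives), one obtains
\[
wu\gamma \ = \ w_{\beta_1}\,u_{\beta_1}(\epsilon_1(x_1+m_1))\cdot v'\,\tilde u\,\tilde\gamma,
\]
and the inductive hypothesis applied to $v'\tilde u\tilde\gamma$ (with $x_{j+1},m_{j+1}$ playing the roles of $x'_j,m'_j$) supplies the remaining factors $w_{\beta_2}u_{\beta_2}(\epsilon_2(x_2+m_2))\cdots w_{\beta_\ell}u_{\beta_\ell}(\epsilon_\ell(x_\ell+m_\ell))$.

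For the commutativity, the Chevalley commutator formula reduces the claim to showing that $a\nu_i + b\nu_1 \notin \Phi$ for every $i \ge 2$ and $a,b\ge 1$. I plan to apply $v'$ and then $(w_{\beta_2}\cdots w_{\beta_{i-1}})^{-1}$ to this combination; using $v'\nu_1 = \beta_1$ and $v'\nu_i = -w_{\beta_2}\cdots w_{\beta_{i-1}}\beta_i$, the claim becomes: $b\xi_i - a\beta_i \notin \Phi$, where $\xi_i := w_{\beta_{i-1}}\cdots w_{\beta_2}\beta_1 \in \Phi_{v_{i-1}}$. Admissibility at step $i-1$ gives $\xi_i - \beta_i \notin \Phi$, so the $\beta_i$-root string through $\xi_i$ has no element below $\xi_i$; in particular $\xi_i - a\beta_i\notin\Phi$ for $a\ge 1$, which handles $b=1$. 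For $b\ge 2$ I would appeal to the Weyl-invariant symmetric bilinear form $(\cdot\mid\cdot)$: with $q := -\langle \xi_i,\beta_i^\vee\rangle \ge 0$,
\[
(b\xi_i - a\beta_i \mid b\xi_i - a\beta_i) \ = \ b^2(\xi_i\mid\xi_i) + a(bq+a)(\beta_i\mid\beta_i) \ \ge \ 4(\xi_i\mid\xi_i) + (\beta_i\mid\beta_i),
\]
which is strictly positive and, using that $\xi_i$ has the squared length of the simple root $\beta_1$, strictly exceeds the squared length of every real root; hence $b\xi_i - a\beta_i$ can be neither real nor imaginary.

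The main obstacle is precisely this commutativity step: promoting admissibility (which controls only single differences $\alpha - \beta_{i+1}$ for $\alpha\in\Phi_{v_i}$) to the stronger non-root statement for all combinations $a\nu_i + b\nu_1$. The $b=1$ case is essentially the root-string consequence of admissibility, whereas $b\ge 2$ requires the bilinear-form length estimate above; once the commutation is secured, the rest is mechanical bookkeeping with conjugation rules for Weyl representatives and one-parameter subgroups together with the inductive hypothesis.
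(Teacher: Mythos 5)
Your reorganization of the argument (inducting on $\ell$ and peeling off the leftmost factor, rather than commuting all the one-parameter subgroups simultaneously as the paper does) is fine, and your $b=1$ analysis correctly extracts from admissibility exactly what the paper extracts: that $\nu_i+\nu_1$ is not a root, hence (by the root string argument via \cite[Prop.~5.1(c)]{K}) that $\langle \nu_i,\nu_1^\vee\rangle\ge 0$. The gap is in how you convert this into commutativity of the root subgroups. First, the ``Chevalley commutator formula'' for a pair of real roots in a general Kac--Moody group is not free: one needs the pair to be prenilpotent and the relevant interval of roots to be finite and real, and supplying that justification is precisely the role of the citation \cite[Lemma~1(b)]{KP2} in the paper (together with \cite[\S2.4, \S2.5]{CER} to transfer it to the representation-theoretic group). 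That lemma needs only the two facts you already have ($\nu_i+\nu_1\notin\Phi$ and $\langle\nu_i,\nu_1^\vee\rangle\ge 0$), so the entire $b\ge 2$ analysis is avoidable.

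Second, the $b\ge 2$ estimate as you state it does not hold in the required generality. Your lower bound is
\[
(b\xi_i-a\beta_i\mid b\xi_i-a\beta_i)\ \ge\ 4(\xi_i\mid\xi_i)+(\beta_i\mid\beta_i)\ =\ 4(\beta_1\mid\beta_1)+(\beta_i\mid\beta_i),
\]
and to exclude a \emph{real} root you must beat $\max_{j\in I}(\alpha_j\mid\alpha_j)$ (cf.\ (\ref{betabetale2}) and its footnote). For a symmetrizable but non-symmetric Cartan matrix the ratio of the $(\alpha_j\mid\alpha_j)$ can be arbitrarily large (e.g.\ $a_{12}=-1$, $a_{21}=-N$ forces $(\alpha_1\mid\alpha_1)/(\alpha_2\mid\alpha_2)=N$), so $4(\beta_1\mid\beta_1)+(\beta_i\mid\beta_i)$ need not exceed the maximal squared root length when $\beta_1$ and $\beta_i$ are short; positivity alone only rules out imaginary roots. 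So as written the argument does not close in the non-symmetric symmetrizable case covered by the lemma (and by Proposition~\ref{prop-rank2}). The repair is simply to drop the commutator-formula route and cite \cite[Lemma~1(b)]{KP2} at the point where you have $\nu_i+\nu_1\notin\Phi$ and $\langle\nu_i,\nu_1^\vee\rangle\ge 0$; with that substitution your induction is a correct variant of the paper's proof.
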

\begin{proof}
We will show that the sum of the positive real roots $\nu_i$ and $\nu_j$  is not a root, which implies that their root spaces and hence one-parameter subgroups   $u_{\nu_i}(\cdot)$ and $u_{\nu_j}(\cdot)$ commute.
The Lemma then follows immediately from this commutativity, since
$$u\gamma \ = \  u_{\nu_1}(x_1+m_1) \cdots u_{\nu_\ell}(x_\ell+m_\ell)$$
and one can apply (\ref{ualphareal}).

After  multiplying   on the left by $w_{\beta_j}\cdots w_{\beta_\ell}$, the claim that $\nu_i+\nu_j$ is not a root is equivalent to the statement  that
$$w_{\beta_{j-1}}\cdots w_{\beta_{i+1}}\beta_i \ - \ \beta_j\,,\qquad\qquad 1\le i < j \le \ell,$$
is never a root.  This follows from Definition~\ref{def-adm} since
$$w_{\beta_{j-1}}\cdots w_{\beta_{i+1}}\beta_i \ \in  \ \Phi_{v_{j-1}},$$
 where $v_{j-1}=w_{\beta_1}\cdots w_{\beta_{j-1}}$ satisfies
$$v_{j-1}w_{\beta_{j-1}}\cdots w_{\beta_{i+1}}\beta_i  \ = \
w_{\beta_1}\cdots w_{\beta_{i-1}}(w_{\beta_i}\beta_i) \ = \
-w_{\beta_1}\cdots w_{\beta_{i-1}}\beta_i$$
and we have used (\ref{Phiwparam}).
\end{proof}

We can now prove the convergence of Eisenstein series on Kac–Moody groups satisfying   Property \ref{conj}, after establishing one more lemma.

\begin{lemma}\label{4.5}
Assume  that $\lambda \in \mathcal{C}^*$ and   Property \ref{conj}. Then there exists a constant $M>0$ depending continuously on $\lambda$ such that, for an admissible word $w=w_{\beta_1}\ldots w_{\beta_\ell}$,
 \begin{multline*}
\sum_{m_1,\ldots, m_\ell\in\mathbb{Z}}a(w_{\beta_1} u_{\beta_1}(x_1+m_1)\cdots w_{\beta_\ell}u_{\beta_\ell}(x_\ell+m_\ell)g)^{\lambda+\rho}
\\ \leq \  M^{\ell}
a^{w^{-1}(\lambda+\rho)}\prod_{\alpha \in \Phi_w}(1+a^\alpha),
\end{multline*}
uniformly for $x_1,\ldots, x_\ell\in\mathbb{R}$, $g\in G_\mathbb{R}$, and $a=a(g)$.

\end{lemma}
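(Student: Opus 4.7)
My strategy is induction on $\ell=\ell(w)$. The base case $\ell=1$ follows immediately from (\ref{4.2new}) applied to the simple root $\beta_1$, since $\lambda\in\mathcal{C}^*$ gives $\langle\lambda,\beta_1^\vee\rangle>0$, and $w_{\beta_1}^{-1}=w_{\beta_1}$, $\Phi_{w_{\beta_1}}=\{\beta_1\}$. For the inductive step I would peel off the rightmost factor: set $w'=w_{\beta_1}\cdots w_{\beta_{\ell-1}}$, which remains an admissible word in the sense of Definition~\ref{def-adm} because the relevant conditions involve only $v_i$ with $i\le\ell-2$, and put $\tilde g=w_{\beta_\ell}u_{\beta_\ell}(x_\ell+m_\ell)g$. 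Applying the inductive hypothesis to $w'$ with $\tilde g$ in place of $g$, and then expanding the product $\prod_{\alpha\in\Phi_{w'}}(1+a(\tilde g)^\alpha)$ as a sum over subsets $S'\subseteq\Phi_{w'}$, bounds the inner sum $\sum_{m_1,\ldots,m_{\ell-1}}$ by $M^{\ell-1}\sum_{S'}a(\tilde g)^{\mu_{S'}}$, where $\mu_{S'}:=(w')^{-1}(\lambda+\rho)+\sum_{\alpha\in S'}\alpha$.

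For each $S'$ I would then carry out the $m_\ell$-sum by invoking Lemma~\ref{4.1} with spectral parameter $\mu_{S'}-\rho$ on the simple root $\beta_\ell$. The required positivity $\langle\mu_{S'}-\rho,\beta_\ell^\vee\rangle>0$ is precisely the conclusion of Lemma~\ref{lem-aqa} with $i=\ell-1$, $v_i=w'$, $\beta_{i+1}=\beta_\ell$, and $S_i=S'$. To allow a \emph{single} continuously-$\lambda$-dependent constant $M$ to work throughout the induction, I need a uniform positive lower bound $\epsilon$ on this quantity across all admissible words and all subsets $S'$. The proof of Lemma~\ref{lem-aqa} already yields the stronger estimate $\langle\mu_{S'}-\rho,\beta_\ell^\vee\rangle\ge\langle\lambda,v_i\beta_\ell^\vee\rangle$; since $v_i\beta_\ell$ varies only over positive real roots $\eta$, writing $\langle\lambda,\eta^\vee\rangle=2(\lambda|\eta)/(\eta|\eta)$ and invoking Weyl invariance of $(\cdot|\cdot)$ (which forces $(\eta|\eta)$ to be one of the finitely many values $(\alpha_i|\alpha_i)$), together with the bound $(\lambda|\eta)\ge\min_i(\lambda|\alpha_i)>0$ valid for $\eta$ a positive integral combination of simple roots, produces the uniform choice $\epsilon:=2\min_i(\lambda|\alpha_i)/\max_i(\alpha_i|\alpha_i)>0$, which depends continuously on $\lambda\in\mathcal{C}^*$. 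Then (\ref{4.2new}) gives $\sum_{m_\ell}a(\tilde g)^{\mu_{S'}}\le M a^{w_{\beta_\ell}\mu_{S'}}(1+a^{\beta_\ell})$ with $M=M_\epsilon$.

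Finally I would reassemble. Using $w^{-1}=w_{\beta_\ell}(w')^{-1}$ one has $a^{w_{\beta_\ell}\mu_{S'}}=a^{w^{-1}(\lambda+\rho)}\prod_{\alpha\in S'}a^{w_{\beta_\ell}\alpha}$, so summing over $S'\subseteq\Phi_{w'}$ refactorizes into $a^{w^{-1}(\lambda+\rho)}\prod_{\alpha\in\Phi_{w'}}(1+a^{w_{\beta_\ell}\alpha})$; multiplying by $(1+a^{\beta_\ell})$ and using the disjoint union $\Phi_w=w_{\beta_\ell}\Phi_{w'}\sqcup\{\beta_\ell\}$ from (\ref{phivandw}) recovers $\prod_{\alpha\in\Phi_w}(1+a^\alpha)$, producing the target upper bound $M^\ell a^{w^{-1}(\lambda+\rho)}\prod_{\alpha\in\Phi_w}(1+a^\alpha)$ and closing the induction. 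The main obstacle is precisely the uniform Godement-type positivity in step two: without a lower bound $\epsilon>0$ independent of $w$, $\ell$ and $S'$, the constant supplied by Lemma~\ref{4.1} would potentially blow up as the admissible word lengthens and the induction would not close with a single $M$. Everything else is a bookkeeping exercise made transparent by admissibility.
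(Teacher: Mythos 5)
Your proof is correct and follows essentially the same route as the paper's: induction on $\ell$, peeling off $w_{\beta_\ell}$, expanding the inductive bound as a sum over subsets of $\Phi_{w'}$, and closing the $m_\ell$-sum via Lemma~\ref{lem-aqa} combined with Lemma~\ref{4.1} under a uniform Godement margin $\epsilon$. The only (immaterial) difference is how that uniform $\epsilon$ is produced: the paper chooses $\epsilon$ with $\lambda-\epsilon\rho\in\mathcal{C}^*$ and applies Lemma~\ref{lem-aqa} to $\lambda-\epsilon\rho$, using $\langle\rho,v\beta_\ell^\vee\rangle\ge 1$, whereas you extract the equivalent lower bound $\langle\lambda,(v_i\beta_\ell)^\vee\rangle\ge 2\min_i(\lambda|\alpha_i)/\max_i(\alpha_i|\alpha_i)$ directly from the invariant bilinear form.
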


\begin{proof}

Since $\lambda$ is in the open chamber $\mathcal C^*$, there exists a positive constant $\epsilon>0$ such that $\lambda- \epsilon \rho \in \mathcal C^*$.  In particular, $\langle \lambda,\alpha^\vee\rangle \ge \epsilon$ for any positive root $\alpha$.  We will  prove the lemma with the value $M=  \max(2,c_\infty(\epsilon))$ from the proof of Lemma~\ref{4.1}, using an induction on $\ell=\ell(w)$.  Indeed,
the case $\ell=1$ is  precisely (\ref{4.2new}).

Assume the Lemma is known  for
$v=w_{\beta_1}\cdots w_{\beta_{\ell-1}}$.
Recall from (\ref{Phiwparam}) that
\[
\Phi_v \ = \ \Phi_+\cap v^{-1}\Phi_{-}\ = \ \{\beta_{\ell-1}, w_{\beta_{\ell-1}}\beta_{\ell-2},\ldots, w_{\beta_{\ell-1}}\cdots w_{\beta_{2}}\beta_1\}\,.
\]
and $\Phi_{w}=w_{\beta_{\ell}}\Phi_v\cup \{\beta_{\ell}\}$ (cf.~(\ref{phivandw})).
  By induction we have
\begin{multline}\label{Mellminus1}
   \sum_{m_1,\ldots, m_{\ell}\in\mathbb{Z}}a(w_{\beta_1}u_{\beta_1}(x_1+m_1)\cdots  w_{\beta_{\ell}}u_{\beta_{\ell}}(x_{\ell}+m_{\ell})g)^{\lambda+\rho} \ \ \le \qquad\qquad\qquad\qquad\qquad\qquad\qquad\qquad\qquad \\
 \   M^{\ell-1}\sum_{m_{\ell}\in\mathbb{Z}}
a(w_{\beta_{\ell}}u_{\beta_{\ell}}(x_{\ell}+m_{\ell})g)^{v^{-1}(\lambda+\rho)}\prod_{\alpha\in\Phi_v}\left(1+a(w_{\beta_{\ell}}u_{\beta_{\ell}}(x_{\ell}+m_{\ell})g)^\alpha\right)\\
 = \ M^{\ell-1}\sum_{S\subset \Phi_v} \sum_{m_{\ell}\in\mathbb{Z}}
a(w_{\beta_{\ell}}u_{\beta_{\ell}}(x_{\ell}+m_{\ell})g)^{v^{-1}(\lambda+\rho)+\sum_{\alpha\in S}\alpha}.
\end{multline}
The $i=\ell-1$ case of
Lemma \ref{lem-aqa} applied to $\lambda-\epsilon\rho$ gives that
\[1 \
< \
\langle v^{-1}(\lambda-\epsilon\rho+\rho)+\sum_{\alpha\in S}\alpha, \beta_{\ell}^\vee\rangle   \ = \
\langle v^{-1}(\lambda+\rho)+\sum_{\alpha\in S}\alpha, \beta_{\ell}^\vee\rangle
-\epsilon\langle \rho ,v\beta_\ell^\vee\rangle
\]
for any $S\subset\Phi_v$.  As $v\beta_\ell$ is a positive root by (\ref{Phiwparam}), $\langle \rho ,v\beta_\ell^\vee\rangle$ at least 1 and hence $\langle v^{-1}(\lambda+\rho)+\sum_{\alpha\in S}\alpha, \beta_{\ell}^\vee\rangle$  must be at least $1+\epsilon$.
This shows that the assumptions of  Lemma \ref{4.1} (with $\alpha=\beta_\ell$) apply to the $m_\ell$-sum, therefore showing
(\ref{Mellminus1}) is bounded by
\[
M^{\ell}\sum_{S\subset\Phi_v} a^{w_{\beta_{\ell}}v^{-1}(\lambda+\rho)+w_{\beta_{\ell}}\sum_{\alpha\in S}\alpha}(1+a^{\beta_{\ell}})
 \ = \  M^{\ell}a^{w^{-1}(\lambda+\rho)}\prod_{\alpha\in \Phi_{w}}(1+a^\alpha).
\]
\end{proof}

\begin{proof}[\bf Proof of Theorem~\ref{thm-intro}:]

We must show that  the series $E_\lambda(g)$ converges absolutely for $g\in \Gamma UA_\mathfrak{C}K$, whenever  $\lambda\in\mathfrak{h}^*_\mathbb{C}$ satisfies $\operatorname{Re}(\lambda)-\rho\in \mathcal{C}^*$ and   Property \ref{conj} holds.  It  is equivalent to show the same assertion for $g\in UA_\mathfrak{C}K$ and $\lambda\in \frak h^*$, since the sums defining $E_\lambda(\gamma g)$ and $E_\lambda(g)$ contain the same terms and are term-by-term bounded by $E_{\operatorname{Re}(\lambda)}(g)$.

Let  $\Gamma_w=\Gamma\cap BwB=\Gamma\cap BwU_w$. Then $\Gamma_w$ is left-invariant under $\Gamma\cap  B$ and right-invariant under $\Gamma\cap U_w$.
Group the terms in the definition of Eisenstein series
\[
E_\lambda(g)=\sum_{\gamma\in (\Gamma\cap B)\backslash \Gamma}\Phi_\lambda(\gamma g)=\sum_{\gamma\in (\Gamma\cap B)\backslash \Gamma}a(\gamma g)^{\lambda+\rho}
\]
as
\begin{equation}\label{Elambda2sum}
  E_\lambda(g)=\sum_{w\in W}\sum_{\gamma_1\in (\Gamma\cap B)\backslash \Gamma_w/( \Gamma\cap U_w)}\sum_{\gamma_2\in\Gamma\cap U_w}a(\gamma_1\gamma_2g)^{\lambda+\rho}.
\end{equation}
Write $\gamma_1=utwu'$, where $u\in U$, $t\in A$, $u'\in U_w$, so that
\begin{equation}\label{ag1g2g}
a(\gamma_1\gamma_2 g)= a(utwu'\gamma_2 g)=t\cdot a(wu'\gamma_2g).
\end{equation}
Writing $w=w_{\beta_1}\cdots w_{\beta_\ell}$ as  an admissible  word (Definition \ref{def-adm}) and using Lemma \ref{lem-aqq}, we can express $wu'\gamma_2$ in the form
\[
w_{\beta_1}u_{\beta_1}(x_1+m_1)\cdots w_{\beta_\ell}u_{\beta_\ell}(x_\ell+m_\ell)
\]
for some $x_1,\ldots, x_\ell\in\mathbb{R}$ (which parameterize $u'$) and $m_1,\ldots, m_\ell\in\mathbb{Z}$ (which parameterize the sum over $\gamma_2$).

 Lemma \ref{4.5} bounds $\sum\limits_{\gamma_2\in\Gamma\cap U_w} a(\gamma_1\gamma_2g)^{\lambda+\rho}$ by
\begin{equation}\label{missingstar}
t^{\lambda+\rho}M^{\ell(w)}
a^{w^{-1}(\lambda+\rho)}\prod_{\alpha \in \Phi_w}(1+a^\alpha),
\end{equation}
where $M$ is the constant in Lemma~\ref{4.5}.
In particular,   the sum over $\gamma_2$ converges absolutely.
Since $\frak C$ is contained in the Tits cone  $\mathop{\cup}_{w \in W}  w\overline{  \mathcal C}$, we may  write  $a=w_0   a_0w_0^{-1}$ for some $w_0\in W$ and $a_0$ such that $a_0^\alpha\ge 1$ for any $\alpha>0$. Since $w_0^{-1}\alpha>0$ for all but finitely many $\alpha>0$, we see that $a^\alpha=a_0^{w_0^{-1}\alpha}\ge 1$ for all but finitely many $\alpha>0$. Hence there exists a constant $C_a>0$ which depends continuously on $a$ such that
\begin{equation} \label{prod-estimate}
\prod_{\alpha \in \Phi_w}(1+a^\alpha) \ \leq  \ C_a 2^{\ell(w)}\prod_{\alpha \in \Phi_w}a^\alpha \ = \ C_a 2^{\ell(w)}a^{\rho-w^{-1}\rho},
\end{equation}
since the product has $2^{\ell(w)}$ terms and $a^\alpha$ is bounded below by a constant depending only on $a$ (cf.~(\ref{rhominuswrho})).

The only remaining manifestation of $\gamma_1$ (aside
from $w$) is $t=t(\gamma_1)$. Recall the notation $\Phi_\lambda(g)=a(g)^{\lambda+\rho}$ and the Gindikin--Karpelevich integral (see (\ref{eee}))
\begin{multline}
\int_{(\Gamma \cap U_w) \backslash U_w} \sum_{\gamma_1 \in (\Gamma \cap B) \backslash \Gamma_w /(\Gamma \cap U_w)} \ \sum_{\gamma_2 \in \Gamma \cap U_w} \Phi_\lambda (\gamma_1 \gamma_2 n) \,dn
   \\ = \ \prod_{\alpha \in \Phi_{w^{-1}}} \frac {\xi ( \la \lambda, \alpha^\vee \ra )}{\xi ( 1+ \la \lambda, \alpha^\vee \ra )}\, .
\end{multline}
 We recall from (\ref{ag1g2g}) that $\Phi_\lambda(\gamma_1 \gamma_2n)= \Phi_\lambda (utwu'\gamma_2n) = t^{\lambda+\rho}\Phi_\lambda(wu'\gamma_2n)$, with $t=t(\gamma_1)$, and formally obtain
\begin{align*}
 \int_{(\Gamma \cap U_w) \backslash U_w} &\sum_{\gamma_1 \in (\Gamma \cap B) \backslash \Gamma_w /(\Gamma \cap U_w)} \ \sum_{\gamma_2 \in \Gamma \cap U_w} \Phi_\lambda (\gamma_1 \gamma_2 n) \,dn
 \\ & = \ \sum_{\gamma_1 \in (\Gamma \cap B) \backslash \Gamma_w / (\Gamma \cap U_w)} \int_{U_w} t(\gamma_1)^{\lambda+\rho} \Phi_\lambda (wu'n')\,dn'  \\ & = \  \sum_{\gamma_1 \in (\Gamma \cap B) \backslash \Gamma_w / (\Gamma \cap U_w)} t(\gamma_1)^{\lambda+\rho} \int_{U_w} \Phi_\lambda (wn')\,dn'\,.
\end{align*}
The last integral is the absolutely-convergent Gindikin-Karpelevich integral
\[ \int_{U_w} \Phi_\lambda(w n') dn' \ = \  \prod_{\alpha >0, w^{-1} \alpha <0} \frac {\Gamma_\R (\la \lambda, \alpha^\vee \ra )}{\Gamma_\R (1+\la \lambda, \alpha^\vee \ra )}  \]
(cf.~(\ref{eee})), and so the unfolding step above is justified by the Fubini Theorem.   We thus obtain
\[
\sum_{\gamma_1\in (\Gamma\cap B)\backslash \Gamma_w/(\Gamma\cap U_w)} t(\gamma_1)^{\lambda+\rho} \ = \
\prod_{\alpha>0, w^{-1}\alpha<0}\frac{\zeta(\langle \lambda,\alpha^\vee\rangle)}{\zeta(1+\langle \lambda,\alpha^\vee\rangle)},
\]
which is bounded for $w\in W$, say, by a constant $C_\lambda$.
Combining the above results with \eqref{missingstar} and \eqref{prod-estimate}, we find that
\[
E_\lambda(g) \ \leq \  C_\lambda \,C_a\sum_{w\in W}(2M)^{\ell(w)}a^{w^{-1}\lambda+\rho}\,,
\]
which converges absolutely by Theorem \ref{mainthm}.
\end{proof}

We end this paper with the observation that in our situation, $\Gamma UA_{\mathfrak C}K$ is strictly larger than $UA_{\mathfrak C}K$  (see the comments in the second paragraph following Theorem \ref{thm-intro}).

\begin{proposition} \label{nonstable}
Let $\mathsf A$ be a generalized Cartan matrix of indefinite type as in \cite[Theorem 4.3(Ind)]{K}, and further assume that $\mathsf A$ is non-singular, as before.  Let $G_{\mathbb R}$ be the complete Kac–Moody group associated with $\mathsf A$ as in Section \ref{Zform}. Then $U A_{\mathfrak C} K$ is not invariant under multiplication by $\Gamma$ on the left.
\end{proposition}

\begin{proof}
Recall that $\frak{C}$  is the interior of the Tits cone $X:=\bigcup_{w\in W}w\overline{\mathcal{C}}$.
Since $G$ is of indefinite type,  by \cite[Proposition 5.8c)]{K} the closure of $X$ is given by
\[
\overline{X} = \{h\in \frak{h}\,\colon\, \langle \alpha, h\rangle \geq 0 \textrm{ for all }\alpha\in \Phi_+^{\rm im}\},
\]
where $\Phi_+^{\rm im}$ is the set of positive imaginary roots.
 By \cite[Theorem 5.6c)]{K}, there exists some $\alpha\in \Phi_+^{\rm im}$ such that $\langle \alpha, \alpha_i^\vee \rangle<0$ for all $i\in I$. It follows that $\alpha_i^\vee\not\in \overline{X}$ for any $i\in I$.

Fix a simple coroot $\alpha_i^\vee \notin\overline{X}$.
Since $0\in \overline{X}  = \overline{\frak{C}}$, $A_{\frak{C}}$ contains elements $h$ arbitrarily close to the identity. Recall that $u_{-\alpha_i}(1)\in\Gamma$. For  $h\in A_{\frak{C}}$, we have that
\[
u_{-\alpha_i}(1)h = h u_{-\alpha_i}(t),\quad  \textrm{where}\quad  t=h^{\alpha_i}.
\]
 The ${\rm SL}(2, \mathbb{R})$ calculation
\[
\begin{pmatrix} 1 & 0 \\ t & 1\end{pmatrix} = \begin{pmatrix} \frac{1}{\delta(t)} & \frac{t}{\delta(t)} \\ 0 & \delta(t)\end{pmatrix} \begin{pmatrix} \frac{1}{\delta(t)} & -\frac{t}{\delta(t)} \\ \frac{t}{\delta(t)} & \frac{1}{\delta(t)}\end{pmatrix}, \ \ \delta(t)=\sqrt{t^2+1}\,,
\]
shows that the Iwasawa $A^+$-component of $u_{-\alpha_i}(1)h\in \Gamma A_{\frak C}$ is equal to $h\cdot h_{\alpha_i}(1/\delta(t))$.  This cannot lie in $A_{\frak{C}}$ for $h$ near the identity, since $\delta(t)\rightarrow \sqrt{2}$ as $t\rightarrow 1$ and $\alpha_i^\vee\not\in\overline{X}$.
\end{proof}

\end{document}